\newcommand\new[0]{\reflectbox{\ensuremath{\mathsf{N}}}} 
\newtheorem{theorem}{Theorem}[section]
\newtheorem{proposition}[theorem]{Proposition}
\newtheorem{definition}[theorem]{Definition}
\newtheorem{remark}[theorem]{Remark}
\newenvironment{proof}[1][Proof]{\noindent\textbf{#1.} }{\ \rule{0.5em}{0.5em}}
\title{Finitely Supported Sets Containing \\ Infinite Uniformly Supported Subsets}
\author{Andrei Alexandru
\institute{Romanian Academy, Institute of Computer Science, Ia\c si, Romania}
\email{andrei.alexandru@iit.academiaromana-is.ro}
\and
Gabriel Ciobanu 
\institute{Romanian Academy and A.I.Cuza University, Ia\c{s}i, Romania}
\email{gabriel@info.uaic.ro}
}
\begin{document}
\maketitle

\begin{abstract}

The theory of finitely supported algebraic structures represents a 
reformulation of Zermelo-Fraenkel set theory in which every construction is 
finitely supported according to the action of a group of permutations of 
some basic elements named atoms. In this paper we study the properties of 
finitely supported sets that contain infinite uniformly supported subsets, 
as well as the properties of finitely supported sets that do not contain 
infinite uniformly supported subsets. For classical atomic sets, we study 
whether they contain or not infinite uniformly supported subsets.

\end{abstract}

\section{Finitely Supported Sets}

Finitely supported mathematics \cite{book} is dealing with the set 
theory foundations for the finitely supported structures. 
Finitely supported structures are related to the recent development of the 
Fraenkel-Mostowski axiomatic set theory working with `nominal sets' and 
dealing with binding and fresh names in computer science \cite{pitts-2}, 
but also to the theory of admissible sets of Barwise \cite{barwise}, in particular 
to the theory of hereditary finite sets. Fraenkel-Mostowski set theory (FM) 
represents an axiomatization of the Fraenkel Basic Model for the 
Zermelo-Fraenkel set theory with atoms (ZFA), a model used originally to prove 
the independence of the axiom of choice and other axioms of set theory 
with atoms. Nominal sets are actually a Zermelo-Fraenkel set theory (ZF) 
alternative to the non-standard Fraenkel-Mostowski set theory whose axioms are 
the ZFA axioms together with a new axiom of finite support claiming that any 
set-theoretical construction has to be finitely supported modulo a canonical 
hierarchically defined permutation action), since nominal sets are 
defined by involving group actions over standard ZF sets, without being 
necessary to modify the ZF axioms of extensionality or foundation.
A nominal set is defined as a usual Zermelo-Fraenkel set endowed with 
a group action of the group of (finitary) permutations over a certain 
fixed countable ZF set $A$ of basic elements whose internal structure is 
ignored (called \emph{atoms}), satisfying also a finite support requirement. 
This finite support requirement states that for any element in a nominal 
set there should exist a finite set of atoms such that any permutation 
fixing pointwise this set of atoms also leaves the element invariant under 
the related group action. By now, nominal sets were used to study the binding, 
scope, freshness and renaming in programming languages and related formal 
systems. The inductively defined finitely supported sets (that are finitely 
supported elements in the powerset of a nominal set) involving the 
name-abstraction together with Cartesian product and disjoint union can 
encode formal syntax modulo renaming of bound variables. In this way, the 
standard theory of algebraic data types can be extended to include 
signatures involving binding operators. In particular, there exists an 
associated notion of structural recursion for defining syntax-manipulating 
functions and a notion of proof by structural induction. 
Certain generalizations of nominal sets are involved in the study of 
automata, programming languages or Turing machines over infinite 
alphabets; for this, a relaxed notion of finiteness called `orbit 
finiteness' was defined; it means `having a finite number of orbits 
(equivalence classes) under a certain group action'~\cite{boj}. 
Fraenkel-Mostowski generalized set theory (FMG) was introduced in~\cite{gabb} 
and generalizes both the size of atoms and the size of support from the 
FM set theory. More exactly, it is presented a generalization of 
the FM sets by replacing `finite support' with `well-orderable (at least 
countable) support' and by considering an uncountable set of atoms. Notions 
such as abstraction and freshness quantifier $\new$ in the FM set theory 
have also been extended into the new framework. In this sense, 
in FMG $\new a.p(a)$ for a predicate $p$ means that $p$ holds for all atoms 
except a well-orderable subset of atoms, while in FM $\new a.p(a)$ means 
that $p$ holds for all atoms except a finite subset of atoms. This approach 
allows binding of infinitely many names in syntax instead of only finitely 
many names.
A very recent work describing a general framework for reasoning about 
syntax with bindings is \cite{popescu}; it overlaps the nominal sets 
framework, but also provides significant distinctions. In this paper, 
the authors employed functors for modelling the presence of variables 
instead of sets with atoms. Furthermore, the authors are able to remove 
the finite support restriction and to accept terms that are infinitely 
branching, terms having infinite depth, or both. Unlike nominal sets 
theory where atoms can only be manipulated via bijections, the functors 
described in \cite{popescu} distinguish between binding variables 
(managed via bijections) and free variables (managed via possibly 
non-bijective functions); these functors allow the authors to apply not 
only swappings or permutations, but also arbitrary substitutions.

Finitely supported mathematics (shortly, FSM) is focused on the foundations 
of set theory (rather than on applications in computer science). In order 
to describe FSM as a theory of finitely supported algebraic structures, we 
refer to the theory of nominal sets (with the mention that the requirement 
regarding the countability of $A$ is irrelevant). We call these sets 
\emph{invariant sets}, using the motivation of Tarski regarding logicality 
(more precisely, a logical notion is defined by Tarski as one that is 
invariant under the permutations of the universe of discourse). FSM is 
actually represented by finitely supported subsets of invariant sets 
together with finitely supported internal algebraic operations or with 
finitely supported relations (that should be finitely supported as subsets 
in the Cartesian product of two invariant sets). There is no major 
technical difference between `FSM' and `nominal' (related to basic 
definitions), but conceptually the nominal approach is related to computer 
science, while FSM deals with the foundations of mathematics (and 
experimental sciences) by studying the consistency and inconsistency of 
various results within the framework of the atomic sets. Our goal is not to 
re-brand the nominal framework (whose value we certainly recognize), but 
to provide a collection of set theoretical results regarding foundations of 
finitely supported structures.

FSM contains both the family of `non-atomic' (i.e., ordinary) ZF sets 
which are proved to be trivial FSM sets (i.e., their elements are left 
unchanged under the effect of the canonical permutation action) and the 
family of `atomic' sets (i.e., sets that contain at least an atom 
somewhere in their structure) with finite supports (hierarchically 
constructed from the empty set and the fixed ZF set $A$ of atoms). One 
task is to analyze whether a classical ZF result (obtained in the 
framework of non-atomic sets) can be adequately reformulated by replacing 
`non-atomic ZF element/set/structure' with `atomic and finitely supported 
element/set/structure' in order to be valid also for atomic sets with 
finite supports.

Note that the FSM sets is not closed under ZF subsets constructions, 
meaning that there exist subsets of FSM sets that fail to be finitely 
supported (for example the simultaneously ZF infinite and coinfinite 
subsets of the set $A$). Thus, for proving results in FSM we cannot use 
related results from the ZF framework without reformulating them with 
respect to the finite support requirement. Furthermore, not even the 
translation of the results from a non-atomic framework into an atomic 
framework (such as Zermelo Fraenkel set theory with atoms obtained by 
weakening ZF axiom of extensionality) is an easy task. Results from ZF may 
lose their validity when reformulating them in Zermelo Fraenkel set theory 
with atoms. For example, it is known that multiple choice principle and 
Kurepa's maximal antichain principle are both equivalent to the axiom of 
choice in ZF. However, Jech proved in \cite{jech} that multiple choice 
principle is valid in the Fraenkel Second Model, while the axiom of choice 
fails in this model. Furthermore, Kurepa's maximal antichain principle is 
valid in the Fraenkel Basic Model, while the axiom of choice fails in this 
model. This means that the following two statements that are valid in ZF, 
namely \emph{`Kurepa's principle implies axiom of choice'} and 
\emph{`Multiple choice principle implies axiom of choice'} fail in 
Zermelo Fraenkel set theory with atoms.

A proof of an FSM result should be internally consistent in FSM and not 
retrieved from ZF, that is it should involve only finitely supported 
constructions (even in the intermediate steps). The meta-theoretical 
techniques for the translation of a result from non-atomic structures to 
atomic structures are based on a refinement of the finite support 
principle from \cite{pitts-2}, a refinement called `$S$-finite supports 
principle' claiming that {\it{for any finite set $S$ of atoms, anything that is 
definable in higher-order logic from $S$-supported structures by using 
$S$-supported constructions is also $S$-supported}}. The formal involvement 
of the $S$-finite support principles actually implies a hierarchical 
constructive method for defining the support of a structure by employing, 
step-by-step, the supports of the substructures of a related structure.

\section{Preliminary Results}

A finite set is a set of the form $\{x_{1}, \ldots, x_{n}\}$. 
Consider a fixed ZF infinite set~$A$ of elements that can be checked only 
for equality. The elements of $A$ are called 'atoms' by analogy with the 
models of the classic ZFA set theory given by Fraenkel and 
Mostowski. A \emph{transposition} is a function $(a\, b):A\to A$ that 
interchanges only $a$ and $b$. A \emph{(finitary) permutation} of $A$ in 
FSM is a bijection of $A$ generated by composing finitely many 
transpositions. We denote by~$S_{A}$ the group of all (finitary) 
permutations of $A$. According to Proposition 2.6 in \cite{book}, a 
bijection on~$A$ is finitely supported if and only if it is a (finitary) 
permutation of $A$. Thus, (finitary) permutations are simply called 
permutations.

\begin{definition}\label{2.4} \ 
\begin{enumerate}
\item Let $X$ be a ZF set. An \emph{$S_{A}$-action} on $X$ is a group 
action $\cdot$ of $S_{A}$ on $X$. An \emph{$S_{A}$-set} is a pair 
$(X,\cdot)$, where $X$ is a ZF set, and $\cdot$ is an $S_{A}$-action on $X$.

\item Let $(X,\cdot)$ be an $S_{A}$-set. We say that \emph{$S\subset A$ 
supports $x$} whenever for each $\pi\in Fix(S)$ we have $\pi\cdot x=x$, 
where $Fix(S)=\{\pi\,|\,\pi(a)=a,\forall a\in S\}$. The least finite set 
(w.r.t. the inclusion relation) supporting $x$ (which exists according to 
\cite{book}) is called \emph{the support of $x$} and is denoted by 
$supp(x)$. An empty supported element is called \emph{equivariant}.

\item Let $(X,\cdot)$ be an $S_{A}$-set. We say that $X$ is an 
\emph{invariant set} if for each $x\in X$ there exists a finite set 
\emph{$S_{x}\subset A$ }which supports $x$. 
\end{enumerate}
\end{definition}

\begin{proposition}  \label{p1} \cite{book,pitts-2}
 Let $(X,\cdot)$ and $(Y,\diamond)$ be $S_{A}$-sets.
\begin{enumerate}
\item The set $A$ of atoms is an invariant set with the $S_{A}$-action
$\cdot:S_{A}\times A\rightarrow A$ defined by $\pi\cdot a:=\pi(a)$
for all $\pi\in S_{A}$ and $a\in A$.  Furthermore, $supp(a)=\{a\}$
for each $a\in A$. 

\item Let $\pi\in S_{A}$. If $x\in X$ is 
finitely supported, then $\pi\cdot x$ is finitely supported and 
$supp(\pi\cdot x)=\{\pi(u)\,|\,u \in supp(x)\}:=\pi(supp(x))$.

\item The Cartesian 
product $X\times Y$ is also an $S_{A}$-set with the $S_{A}$-action 
$\otimes:S_{A}\times(X\times Y)\rightarrow(X\times Y)$ defined by 
$\pi\otimes(x,y)=(\pi\cdot x,\pi\diamond y)$ for all $\pi\in S_{A}$ and all 
$x\in X$, $y\in Y$. If $(X,\cdot)$ and $(Y,\diamond)$ are invariant sets, 
then $(X\times Y,\otimes)$ is also an invariant set.

\item The powerset $\wp(X)=\{Z\,|\, 
Z\subseteq X\}$ is also an $S_{A}$-set with the $S_{A}$-action $\star: 
S_{A}\times\wp(X) \rightarrow \wp(X)$ defined by $\pi\star Z:=\{\pi\cdot 
z\,|\, z\in Z\}$ for all $\pi \in S_{A}$, and all $Z \subseteq X$. For 
each invariant set $(X,\cdot)$, we denote by $\wp_{fs}(X)$ the set of elements in $\wp(X)$ which are finitely supported according to the 
action~$\star$ .
$(\wp_{fs}(X),\star|_{\wp_{fs}(X)})$ is an invariant set.

\item The finite powerset of $X$ denoted by $\wp_{fin}(X)=\{Y \subseteq 
X\,|\, Y \text{finite}\}$ and the cofinite powerset of $X$ denoted by 
$\wp_{cofin}(X)=\{Y \subseteq X\,|\, X\setminus Y \text{finite}\}$ are 
both $S_{A}$-sets with the $S_{A}$-action $\star$ defined as in the 
previous item~(2). If $X$ is an invariant set, then both $\wp_{fin}(X)$ and 
$\wp_{cofin}(X)$ are invariant sets.

\item We have $\wp_{fs}(A)=\wp_{fin}(A) \cup \wp_{cofin}(A)$. \ 
If $X \in \wp_{fin}(A)$, then $supp(X)=X$.\\ 
If $X \in \wp_{cofin}(A)$, then $supp(X)=A \setminus X$.

\item The
disjoint union of $X$ and $Y$ defined by $X+Y=\{(0,x)\,|\, x\in X\}\cup\{(1,y)\,|\, y\in Y\}$
is an $S_{A}$-set with the $S_{A}$-action $\star:S_{A}\times(X+Y)\rightarrow(X+Y)$
defined by $\pi\star z=(0,\pi\cdot x)$ if $z=(0,x)$ and $\pi\star z=(1,\pi\diamond y)$
if $z=(1,y)$. If $(X,\cdot)$ and $(Y,\diamond)$ are invariant sets, then
$(X+Y,\star)$ is also an invariant set.

\item Any ordinary (non-atomic) ZF-set $X$ (such as $\mathbb{N},\mathbb{Z},\mathbb{Q}$
or $\mathbb{R}$ for example) is an invariant set with the single possible $S_{A}$-action
$\cdot:S_{A}\times X\rightarrow X$ defined by $\pi\cdot x:=x$ for
all $\pi \in S_{A}$ and $x\in X$. 
\end{enumerate}
\end{proposition}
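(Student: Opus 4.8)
The plan is to handle the eight items in turn, reducing each to the single structural fact that a finite set $S$ supports an element exactly when every permutation fixing $S$ pointwise leaves that element invariant, together with the freedom to choose transpositions $(a\,b)$ avoiding any prescribed finite set of atoms. For items (1), (3), (4), (5), (7) and (8) I would first verify that the displayed map is a genuine group action (the identity acts trivially and composition is respected), which is routine, and then exhibit an explicit finite support for each element. Thus for (1) the set $\{a\}$ supports $a$, and it is least because some transposition moving $a$ fixes the empty set; for (3) the set $supp(x)\cup supp(y)$ supports $(x,y)$; for (5) the set $\bigcup_i supp(y_i)$ supports a finite set $\{y_1,\dots,y_n\}$, while for the cofinite case I would use that $\pi\star$ commutes with complementation inside $X$ and that $\pi\star X=X$; for (7) the support of an injected element equals the support of the original; and (4) is immediate since $\wp_{fs}(X)$ consists by definition of the finitely supported subsets. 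Item (8) additionally claims triviality of the canonical action on a pure (non-atomic) set, which holds because no atom occurs in the transitive closure of such a set, so the hierarchically defined action fixes it.

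The conceptual core is item (2). Here the key lemma is that $S$ supports $x$ if and only if $\pi(S)$ supports $\pi\cdot x$: given $\sigma\in Fix(\pi(S))$ one checks directly that $\pi^{-1}\sigma\pi\in Fix(S)$, whence $\sigma\cdot(\pi\cdot x)=\pi\cdot((\pi^{-1}\sigma\pi)\cdot x)=\pi\cdot x$. Instantiating this with $S=supp(x)$ yields $supp(\pi\cdot x)\subseteq\pi(supp(x))$, and applying it to $\pi^{-1}$ and $\pi\cdot x$ yields the reverse inclusion, so the two sets coincide.

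The step I expect to be the main obstacle is the characterization in item (6). Let $Y\subseteq A$ be supported by a finite set $S$. I would argue by a swap: if $Y\not\subseteq S$, choose $a\in Y\setminus S$; then for every $b\in A\setminus S$ with $b\neq a$ the transposition $(a\,b)$ lies in $Fix(S)$, so $(a\,b)\star Y=Y$, and since $a\in Y$ we obtain $b=(a\,b)(a)\in Y$. Hence $A\setminus S\subseteq Y$ and $Y$ is cofinite; otherwise $Y\subseteq S$ is finite, which establishes $\wp_{fs}(A)=\wp_{fin}(A)\cup\wp_{cofin}(A)$. For the support formulas, a finite $Y$ is supported by $Y$ (any permutation fixing $Y$ pointwise fixes it setwise) and this is least, since swapping some $a\in Y$ with a $b\in A\setminus Y$, available because $A$ is infinite, would move $Y$; the cofinite case is dual, using that the finite set $A\setminus Y$ supports $Y$ and running the same swapping argument on the complement.
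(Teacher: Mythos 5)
The paper does not prove this proposition: it is stated as a preliminary and attributed to the cited references, so there is no in-paper argument to compare against. Your reconstruction is correct and is essentially the standard proof from those sources --- the conjugation lemma ($\sigma\in Fix(\pi(S))$ iff $\pi^{-1}\sigma\pi\in Fix(S)$) for item (2), applied in both directions to get equality of supports, and the swapping argument with a fresh atom $b\notin S$ for the dichotomy and the support formulas in item (6), are exactly the intended ideas. The only point worth flagging is item (8): taken literally, ``the single possible $S_{A}$-action'' is too strong (one could transport the action on $A$ along a ZF bijection to a countable pure set, and every element of the result would still be finitely supported), so your reading of it as the triviality of the canonical hierarchically defined action --- justified by the absence of atoms in the transitive closure --- is the right, and standard, interpretation of what the cited references actually assert.
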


\begin{definition}\label{2.14} \ 
\begin{enumerate}
\item Let $(X,\cdot)$ be an $S_{A}$-set. A subset~$Z$ of $X$ is called 
\emph{finitely supported} if and only if $Z\in\wp_{fs}(X)$. 
 A subset $Z$ of $X$ is \emph{uniformly supported} if all the elements 
of~$Z$ are supported by the same set $S$ (and so $Z$ is itself supported 
by $S$).

\item Let $(X,\cdot)$ be a finitely supported subset of an $S_{A}$- set 
$(Y, \cdot)$. A subset~$Z$ of $Y$ is called \emph{finitely supported 
subset of $X$} (and we denote this by $Z \in \wp_{fs}(X)$) if and only if 
$Z\in\wp_{fs}(Y)$ and $Z \subseteq X$. Similarly, we say that a uniformly 
supported subset of $Y$ contained in $X$ is a \emph{uniformly supported 
subset of $X$}.
\end{enumerate}
\end{definition} 

From Definition \ref{2.4}, a subset~$Z$ of an invariant set $(X, \cdot)$ 
is finitely supported by a set $S \subseteq A$ if and only if $\pi \star Z 
\subseteq Z$ for all $\pi \in Fix(S)$, i.e. if and only if $\pi \cdot z 
\in Z$ for all $\pi \in S_{A}$ and all $z \in Z$. This is because any 
permutation of atoms should have finite order, and so the relation $\pi \star Z 
\subseteq Z$ is equivalent to $\pi \star Z 
= Z$.

Due to Proposition \ref{p1}(2), whenever $X$ is a finitely supported 
subset of an invariant set~$Y$, the uniform powerset of $X$ 
denoted by $\wp_{us}(X)=\{Z\!\subseteq\! X\,|\, Z\, \text{uniformly 
supported}\}$ is a subset of $\wp_{fs}(Y)$ supported by $supp(X)$. This is 
because, whenever $Z \subseteq X$ is uniformly supported by $S$ and $\pi 
\in Fix(supp(X))$, we have $\pi \star Z \subseteq \pi \star X=X$ and $\pi 
\star Z$ is uniformly supported by $\pi (S)$. Similarly, $\wp_{fin}(X)$ 
and $\wp_{cofin}(X)$ are subsets of $\wp_{fs}(Y)$ supported by $supp(X)$. 
We consider that $\emptyset$, being a finite subset of $X$, belongs to 
$\wp_{us}(X)$.

\begin{definition}\label{2.10-1}
Let $X$ and $Y$ be invariant sets.
\begin{enumerate} 
\item A function $f:X\rightarrow Y$ is \emph{finitely supported} if 
$f\in\wp_{fs}(X\times Y)$. 
\item Let $Z$ be a finitely supported subset of $X$ and $T$ a finitely 
supported subset of $Y$. A function $f:Z\rightarrow T$ is \emph{finitely 
supported} if $f\in\wp_{fs}(X\times Y)$. The set of all finitely supported 
functions from $Z$ to $T$ is denoted by $T^{Z}_{fs}$.
\end{enumerate}
\end{definition}

\begin{proposition}\label{2.18'} \cite{book, pitts-2} 
Let $(X,\cdot)$ and $(Y,\diamond)$ be two invariant sets.
\begin{enumerate}
\item   $Y^{X}$ (i.e. the set of all functions from $X$ to $Y$) is an 
$S_{A}$-set with the $S_{A}$-action $\widetilde{\star}:S_{A}\times 
Y^{X}\rightarrow Y^{X}$ defined by $(\pi \widetilde{\star}f)(x) = 
\pi\diamond(f(\pi^{-1}\cdot x))$ for all $\pi\in S_{A}$, $f\in Y^{X}$ and 
$x\in X$. A function $f:X\rightarrow Y$ is finitely supported (in the sense 
of Definition \ref{2.10-1}) if and only if it is finitely supported with 
respect the permutation action $\widetilde{\star}$.
\item Let $Z$ be a finitely supported subset of $X$ and $T$ a finitely 
supported subset of $Y$. A function $f:Z\rightarrow T$ is supported by 
a finite set $S \subseteq A$ if and only if for all $x \in Z$ and all 
$\pi \in Fix(S)$ we have $\pi \cdot x \in Z$, $\pi \diamond f(x) \in T$ 
and $f(\pi\cdot x)=\pi\diamond f(x)$. 
\end{enumerate} 
\end{proposition}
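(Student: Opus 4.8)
The plan is to handle both parts by identifying a function $f$ with its graph $G_f=\{(x,f(x))\mid x\in\mathrm{dom}(f)\}$, since Definition \ref{2.10-1} defines finite support of $f$ precisely as finite support of $G_f$ viewed as an element of $\wp_{fs}(X\times Y)$. The whole argument then reduces to comparing the action $\widetilde{\star}$ on functions with the powerset action $\star$ (built on the product action $\otimes$ of Proposition \ref{p1}(3)) on graphs, so the core of the proof is a single bookkeeping identity.

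For part (1), I would first check that $\widetilde{\star}$ is genuinely a group action, which is a direct computation: $(id\,\widetilde{\star}f)(x)=id\diamond f(id^{-1}\cdot x)=f(x)$, while expanding $((\pi\sigma)\widetilde{\star}f)(x)$ and $(\pi\widetilde{\star}(\sigma\widetilde{\star}f))(x)$ both yield $\pi\diamond(\sigma\diamond f(\sigma^{-1}\cdot\pi^{-1}\cdot x))$, using that $\diamond$ is itself an action and that $(\pi\sigma)^{-1}=\sigma^{-1}\pi^{-1}$. The key observation is then the identity $G_{\pi\widetilde{\star}f}=\pi\star G_f$: computing the graph of $\pi\widetilde{\star}f$ gives $\{(x,\pi\diamond f(\pi^{-1}\cdot x))\mid x\in X\}$, and the substitution $x=\pi\cdot x'$ rewrites this as $\{(\pi\cdot x',\pi\diamond f(x'))\mid x'\in X\}$, which is exactly $\pi\star G_f$. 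Consequently $\pi\widetilde{\star}f=f$ holds if and only if $\pi\star G_f=G_f$, so a finite set supports $f$ for $\widetilde{\star}$ exactly when it supports $G_f$ in $\wp_{fs}(X\times Y)$, which is the claimed equivalence.

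For part (2), I would argue directly that $S$ supports $G_f$ if and only if the three listed conditions hold, using that $\pi\star G_f\subseteq G_f$ already forces $\pi\star G_f=G_f$ because permutations have finite order (as noted after Definition \ref{2.14}). For the forward direction, if $S$ supports $G_f$ and $\pi\in Fix(S)$, then for $x\in Z$ the pair $(x,f(x))\in G_f$ is mapped to $\pi\otimes(x,f(x))=(\pi\cdot x,\pi\diamond f(x))\in G_f$; since every element of $G_f$ has the form $(z,f(z))$ with $z\in Z$, this forces $\pi\cdot x\in Z$ together with $\pi\diamond f(x)=f(\pi\cdot x)\in T$. Conversely, assuming the three conditions, each $(\pi\cdot x,\pi\diamond f(x))$ equals $(\pi\cdot x,f(\pi\cdot x))\in G_f$, giving $\pi\star G_f\subseteq G_f$ and hence support by $S$.

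I do not expect a genuine obstacle here; the substance is the identity $G_{\pi\widetilde{\star}f}=\pi\star G_f$, and the only point demanding care is that in part (2) the domain $Z$ and codomain $T$ are merely finitely supported subsets rather than full invariant sets, so the conditions $\pi\cdot x\in Z$ and $\pi\diamond f(x)\in T$ must be \emph{extracted} from the requirement that the image pair lie in $G_f$, rather than assumed a priori.
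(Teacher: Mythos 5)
Your proof is correct, and the key identity $G_{\pi\widetilde{\star}f}=\pi\star G_f$ together with the finite-order argument for upgrading $\pi\star G_f\subseteq G_f$ to equality is exactly the standard route. Note that the paper itself gives no proof of this proposition (it is quoted from the cited references as a preliminary), so there is nothing further to compare against; your argument is the expected one.
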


\section{FSM Uniformly Infinite Sets} \label{chap9}

\begin{definition} \label{dddd}
Let $X$ be a finitely supported subset of an invariant set~$Y$. $X$ is 
called \emph{FSM uniformly infinite} if there exists an infinite, 
uniformly supported subset of $X$. 
Otherwise, we call $X$ \emph{FSM non-uniformly infinite}. 
\end{definition}

\begin{theorem} \label{lem1} 
Let $X$ be a finitely supported subset of an invariant set $(Y, \cdot)$ 
such that $X$ is not FSM uniformly infinite.  Then the set $\wp_{us}(X)$ 
is not FSM uniformly infinite. 
\end{theorem}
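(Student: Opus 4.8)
The plan is to reduce the statement to a finiteness condition on ``layers'' of supported elements. For a finite set $S\subseteq A$ and a finitely supported subset $W$ of an invariant set, write $W_{S}=\{w\in W\,|\,supp(w)\subseteq S\}$. My first step is the observation that $W$ is \emph{not} FSM uniformly infinite if and only if $W_{S}$ is finite for every finite $S\subseteq A$. Indeed, $W_{S}$ is itself a uniformly supported subset of $W$ (supported by $S$), so if some $W_{S}$ were infinite then $W$ would be FSM uniformly infinite; conversely, a uniformly supported subset of $W$ with common support $S$ is contained in $W_{S}$ and hence is finite once every $W_{S}$ is finite. With this characterization in hand, proving the theorem amounts to showing that $(\wp_{us}(X))_{S}$ is finite for every finite $S$, under the hypothesis that $X_{S}$ is finite for every finite $S$.

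The heart of the argument is the following claim: if $Z\in\wp_{us}(X)$ and $supp(Z)\subseteq S$, then in fact every element of $Z$ is supported by $S$, that is $Z\subseteq X_{S}$. This is the step I expect to be the main obstacle, because it requires separating ``$S$ supports the set $Z$'' from ``$S$ supports every member of $Z$'' — a distinction that genuinely fails for arbitrary finitely supported subsets (for instance $A\setminus\{a\}$ is supported by $\{a\}$, yet none of its elements is). The point is that uniform supportedness is exactly what rules this out. Let $T$ be a common finite support of all elements of $Z$, and suppose toward a contradiction that some $z\in Z$ had an atom $a\in supp(z)\setminus S$. Choosing an atom $b\notin S\cup T\cup supp(z)$ (possible, since $A$ is infinite while these three sets are finite), the transposition $(a\,b)$ lies in $Fix(S)$, so $(a\,b)\cdot z\in Z$ because $S$ supports $Z$; but by Proposition~\ref{p1}(2) the support of $(a\,b)\cdot z$ is $(a\,b)(supp(z))$, which contains $(a\,b)(a)=b\notin T$, contradicting that $T$ supports every element of $Z$. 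Hence no such $z$ exists and $Z\subseteq X_{S}$.

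It remains to assemble these pieces. Since every $Z\in(\wp_{us}(X))_{S}$ satisfies $Z\subseteq X_{S}$ by the claim, we obtain $(\wp_{us}(X))_{S}\subseteq\wp(X_{S})$ (in fact equality holds, as any $Z\subseteq X_{S}$ is uniformly supported by $S$ and so lies in $\wp_{us}(X)$). By hypothesis $X$ is not FSM uniformly infinite, so the first step gives that $X_{S}$ is finite; therefore $\wp(X_{S})$, and a fortiori $(\wp_{us}(X))_{S}$, is finite. As this holds for every finite $S\subseteq A$, the characterization from the first step yields that $\wp_{us}(X)$ is not FSM uniformly infinite, completing the proof.
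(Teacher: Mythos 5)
Your proof is correct and follows essentially the same route as the paper's: the crux in both is that a uniformly supported $Z$ with $supp(Z)\subseteq S$ must have every one of its elements supported by $S$, established by the same fresh-transposition argument via Proposition~\ref{p1}(2). The only cosmetic difference is that you conclude by counting subsets of the finite layer $X_{S}$, whereas the paper reaches its contradiction by taking the union of the hypothetical infinite family --- which rests on the same finiteness count.
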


\begin{proof}Suppose, by contradiction, that the set $\wp_{us}(X)$ contains an infinite 
subset $\mathcal{F}$ such that all the elements of $\mathcal{F}$ are 
different and supported by the same finite set $S$. 
By convention, without assuming that $i \mapsto X_{i}$ is finitely 
supported, we understand $\mathcal{F}$ as $\mathcal{F}=(X_{i})_{i \in I}$ with 
the properties that $X_{i} \neq X_{j}$ whenever $i \neq j$ and 
$supp(X_{i}) \subseteq S$ for all $i \in I$. Let us fix an arbitrary $j \in I$. 
We prove that $supp(X_{j})=\underset{x \in X_{j}}{\cup}supp(x)$. Indeed, 
since $X_{j}$ is uniformly supported, there exists a finite subset of atoms 
$T$ such that~$T$ supports every $x \in X_{j}$, i.e. $supp(x) \subseteq T$ for 
all $x \in X_{j}$. Thus, $\cup\{supp(x)\,|\, x\in X_{j}\} \subseteq T$. Clearly, 
$supp(X_{j}) \subseteq \cup\{supp(x)\,|\, x\in X_{j}\}$. Conversely, let $a \in 
\cup\{supp(x)\,|\, x\in X_{j}\}$. Thus, there exists $x_{0} \in X_{j}$ such that 
$a \in supp(x_{0})$. Let $b$ be an atom such that $b \notin supp(X_{j})$ and 
$b \notin T$. Such an atom exists because~$A$ is infinite, while $supp(X_{j})$ 
and $T$ are both finite. We prove by contradiction that $(b\; a) \cdot 
x_{0} \notin X_{j}$. Indeed, suppose that $(b\; a) \cdot x_{0}=y \in X_{j}$.  
Since $a \in supp(x_{0})$, by Proposition \ref{p1}(2), we have 
$b=(b\;a)(a) \in (b\; a)(supp(x_{0}))=supp((b\; a) \cdot x_{0})=supp(y)$. 
Since $supp(y) \subseteq T$, we get $b \in T$: a contradiction! Therefore, 
$(b\; a) \star X_{j} \neq X_{j}$, where~$\star$ is the canonical $S_{A}$-action on 
$\wp(Y)$.  Since $b \notin supp(X_{j})$, we prove by contradiction that $a \in 
supp(X_{j})$. Indeed, suppose that $a \notin supp(X_{j})$. We have that $(b\; a) 
\in Fix(supp(X_{j}))$. Since $supp(X_{j})$ supports $X_{j}$, it follows that $(b\; a) 
\star X_{j}=X_{j}$ which is a contradiction. Thus, $a \in supp(X_{j})$ and so 
$supp(X_{j})=\underset{x \in X_{j}}{\cup}supp(x)$.

Therefore, because $supp(X_{j}) \subseteq S$,~$X_{j}$ has 
the property that $supp(x)\subseteq S$ for all $x \in X_{j}$. Since $j$ 
has been arbitrarily chosen from $I$, it follows that $\underset{ i \in 
I}{\cup}X_{i}$ is an uniformly supported subset of $X$ (all its elements 
being supported by $S$). Furthermore, $\underset{i \in I}{\cup}X_{i}$ is 
infinite since the family $(X_{i})_{i \in I}$ is infinite and $X_{i} \neq 
X_{j}$ whenever $i \neq j$. This contradicts the hypothesis. 
\end{proof}

\begin{theorem} \label{lem2}
Let $X$ be a finitely supported subset of an invariant set $(Y, \cdot)$ 
such that $X$ is not FSM uniformly infinite. 
Then the set $\wp_{fin}(X)$ is not FSM uniformly infinite. 
\end{theorem}
\begin{proof} 
We always have that $\wp_{fin}(X) \subseteq \wp_{us}(X)$ because any 
finite subset of $X$ of form $\{x_{1}, \ldots, x_{n}\}$ is uniformly 
supported by $supp(x_{1})\cup\ldots\cup supp(x_{n})$. Since $\wp_{us}(X)$ 
does not contain an infinite uniformly supported subset, it follows that 
neither $\wp_{fin}(X)$ contains an infinite uniformly supported subset. 
\end{proof}

\begin{theorem}\label{th1} 
Let $X$ be a finitely supported subset of an invariant set $(Y, \cdot)$. 
\begin{enumerate}
\item If $X$ is not FSM uniformly infinite, then any finitely supported 
order-preserving (with respect to the inclusion relation) function 
$f:\wp_{us}(X)\to \wp_{us}(X)$ has a least fixed point supported 
by $supp(f) \cup supp(X)$.

\item If $X$ is not FSM uniformly infinite, then any finitely supported 
order-preserving (with respect to the inclusion relation) function 
$f:\wp_{fin}(X)\to \wp_{fin}(X)$ has a least fixed point supported by 
$supp(f) \cup supp(X)$.
\end{enumerate}
\end{theorem}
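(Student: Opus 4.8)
The plan is to handle both parts by the same Knaster--Tarski style construction, building the least fixed point by iterating $f$ from the bottom element rather than by intersecting pre-fixed points, and using the hypothesis to guarantee that the iteration terminates. Write $L$ for the domain in question, namely $\wp_{us}(X)$ in part~(1) and $\wp_{fin}(X)$ in part~(2); in either case $(L,\subseteq)$ has least element $\emptyset$, and $f:L\to L$ is order-preserving and finitely supported. The crucial external input is that, by Theorem~\ref{lem1} (for part~(1)) and by Theorem~\ref{lem2} (for part~(2)), the set $L$ is itself not FSM uniformly infinite, so every uniformly supported subset of $L$ is finite; this will serve as a substitute for the ascending chain condition.

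First I would form the ascending chain $\emptyset\subseteq f(\emptyset)\subseteq f^{2}(\emptyset)\subseteq\cdots$ obtained by iterating $f$ from $\emptyset$; the inclusions hold because $\emptyset\subseteq f(\emptyset)$ and $f$ is order-preserving, and every iterate lies in $L$ since $f$ maps $L$ into $L$. Next I would control the supports. For any $W\in L$ and any $\pi\in Fix(supp(f))$, Proposition~\ref{2.18'}(2) gives $f(\pi\star W)=\pi\star f(W)$; if moreover $\pi\in Fix(supp(W))$ then $\pi\star W=W$, whence $\pi\star f(W)=f(W)$, so $supp(f(W))\subseteq supp(f)\cup supp(W)$. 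Since $supp(\emptyset)=\emptyset$, an immediate induction yields $supp(f^{n}(\emptyset))\subseteq supp(f)$ for every $n\in\mathbb{N}$. Thus $\{f^{n}(\emptyset)\,|\,n\in\mathbb{N}\}$ is a \emph{uniformly} supported subset of $L$, all of whose members are supported by $supp(f)$.

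Now I would apply the hypothesis: since $L$ is not FSM uniformly infinite, this uniformly supported family is finite, so the ascending chain takes only finitely many distinct values. By the pigeonhole principle there are indices $i<j$ with $f^{i}(\emptyset)=f^{j}(\emptyset)$, and monotonicity of the chain forces $f^{i}(\emptyset)=f^{i+1}(\emptyset)=f(f^{i}(\emptyset))$; hence $p:=f^{i}(\emptyset)$ is a fixed point of $f$ lying in $L$. For minimality, let $Z\in L$ be any fixed point of $f$; from $\emptyset\subseteq Z$ and order-preservation we get $f^{n}(\emptyset)\subseteq Z$ for all $n$, so $p\subseteq Z$, and $p$ is the least fixed point. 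Finally $p$ is supported by $supp(f)$, hence a fortiori by $supp(f)\cup supp(X)$, as claimed.

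The main obstacle, I expect, is not the fixed point combinatorics but the support bookkeeping that licenses the appeal to Theorems~\ref{lem1} and~\ref{lem2}: one must verify both that the iterates stay inside $L$ and, more importantly, that they form a \emph{uniformly} supported---not merely a finitely supported---family, which is precisely what the uniform bound $supp(f^{n}(\emptyset))\subseteq supp(f)$ provides. It is worth stressing why the usual proof via $\bigcap\{W\in L\,|\,f(W)\subseteq W\}$ is not available here: the collection of pre-fixed points is in general only finitely supported and need not be uniformly supported, so its intersection may fail to lie in $L$. Replacing chain-completeness of the lattice by the finiteness of uniformly supported subsets is exactly what keeps every step of the argument internal to the finitely supported universe.
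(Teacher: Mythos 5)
Your proposal is correct and follows essentially the same route as the paper: iterate $f$ from $\emptyset$, show by induction that the iterates form a uniformly supported ascending chain, invoke Theorem~\ref{lem1} (resp.\ Theorem~\ref{lem2}) to conclude the chain is finite and hence stationary, and identify the stationary value as the least fixed point. Your support bound $supp(f^{n}(\emptyset))\subseteq supp(f)$ is in fact slightly sharper than the paper's $supp(f)\cup supp(X)$, and your explicit minimality argument fills in a step the paper dismisses as obvious, but these are refinements, not a different method.
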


\begin{proof}Let $f:\wp_{us}(X)\to \wp_{us}(X)$ be a finitely supported 
order-preserving function. Firstly, since $\wp_{us}(X)$ is a subset of 
$\wp_{fs}(Y)$ supported by $supp(X)$, we have $\pi \star \emptyset, 
\pi^{-1} \star \emptyset \in \wp_{us}(X)$ for any permutation $\pi \in 
Fix(supp(X))$. Thus, $\emptyset \subseteq \pi \star 
\emptyset$ and $\emptyset \subseteq \pi^{-1} \star \emptyset$. Since the 
relation $\subseteq$ on $\wp_{us}(X)$ is supported by $supp(X)$, we get 
$\pi \star \emptyset \subseteq \pi \star (\pi^{-1} \star \emptyset)= (\pi 
\circ \pi ^{-1}) \star \emptyset= \emptyset$, and so $\emptyset = \pi 
\cdot \emptyset$ which means that $\emptyset$ is an element in 
$\wp_{us}(X)$ supported by $supp(X)$. Actually, $\emptyset$ belongs to 
$\wp_{fin}(X)$ that is a subset of $\wp_{us}(X)$.

Since $\emptyset \subseteq f(\emptyset)$ and $f$ is order-preserving, we 
can define the ascending sequence $\emptyset \subseteq f(\emptyset) 
\subseteq f^{2}(\emptyset) \subseteq \ldots \subseteq f^{n}(\emptyset) 
\subseteq \ldots$, where $f^{n}(\emptyset)=f(f^{n-1}(\emptyset))$ and 
$f^{0}(\emptyset)=\emptyset$. We prove by induction that 
$(f^{n}(\emptyset))_{n \in \mathbb{N}}$ is uniformly supported by $supp(f) 
\cup supp(X)$, namely $supp(f^{n}(\emptyset))\subseteq supp(f) \cup 
supp(X)$ for each $n \in \mathbb{N}$.
We have $supp(f^{0}(\emptyset))=supp(\emptyset) \subseteq supp(X) 
\subseteq supp(f) \cup supp(X)$. Let us assume that 
$supp(f^{n}(\emptyset))\subseteq supp(f) \cup supp(X)$ for some $n \in 
\mathbb{N}$. We have to prove that $supp(f^{n+1}(\emptyset))\subseteq 
supp(f) \cup supp(X)$. Let $\pi \in Fix(supp(f) \cup supp(X))$. From the 
inductive hypothesis, we have $\pi \in Fix(supp(f^{n}(\emptyset)))$ and 
so $\pi\star f^{n}(\emptyset)=f^{n}(\emptyset)$. Since $\pi$ fixes 
$supp(f)$ pointwise, according to Proposition~\ref{2.18'}, we have 
$\pi\star f^{n+1}(\emptyset)=\pi\star f(f^{n}(\emptyset))=f(\pi\star 
f^{n}(\emptyset))=f(f^{n}(\emptyset))=f^{n+1}(\emptyset)$. Therefore, 
$(f^{n}(\emptyset))_{n \in \mathbb{N}} \subseteq \wp_{us}(X)$ is uniformly 
supported by $supp(f) \cup supp(X)$.  Thus, according to Theorem \ref{lem1}, 
$(f^{n}(\emptyset))_{n \in \mathbb{N}}$ should be finite, and so there 
exists $n_{0} \in \mathbb{N}$ such that 
$f^{n}(\emptyset)=f^{n_{0}}(\emptyset)$ for all $n \geq n_{0}$. Thus, 
$f(f^{n_{0}}(\emptyset))=f^{n_{0}+1}(\emptyset)=f^{n_{0}}(\emptyset)$, and 
so $f^{n_{0}}(\emptyset)$ is a fixed point of $f$. It is supported by 
$supp(f) \cup supp(X)$, and obviously it is the least one.

2. A similar argument allows us to prove the second item of the 
proposition. This time Theorem~\ref{lem2} is used to prove that 
the uniformly supported ascending family $(f^{n}(\emptyset))_{n \in 
\mathbb{N}} \subseteq \wp_{fin}(X)$ is finite, and so it is stationary. 
\end{proof}

\begin{theorem} \label{th11} 
Let $X$ be a finitely supported subset of an invariant set $(Y, \cdot)$. 
\begin{enumerate}
\item If $X$ is not FSM uniformly infinite and $f:\wp_{us}(X)\to 
\wp_{us}(X)$ is finitely supported with the property that $Z \subseteq 
f(Z)$ for all $Z \in \wp_{us}(X)$, then for each $Z \in \wp_{us}(X)$ there 
exists some $m \in \mathbb{N}$ such that $f^{m}(Z)$ is a fixed point of $f$.
\item If $X$ is not FSM uniformly infinite and $f:\wp_{fin}(X)\to 
\wp_{fin}(X)$ is finitely supported with the property that $Z \subseteq 
f(Z)$ for all $Z \in \wp_{fin}(X)$, then for each $Z \in \wp_{fin}(X)$ there 
exists some $m \in \mathbb{N}$ such that $f^{m}(Z)$ is a fixed point of $f$.
\end{enumerate}
\end{theorem}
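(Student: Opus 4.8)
The plan is to imitate the proof of Theorem \ref{th1}, the key observation being that the inflationary hypothesis $Z\subseteq f(Z)$ furnishes the required ascending chain directly, so that order-preservation is not needed. Fix an arbitrary $Z\in\wp_{us}(X)$. Since $f$ maps $\wp_{us}(X)$ into itself, a trivial induction gives $f^{n}(Z)\in\wp_{us}(X)$ for every $n\in\mathbb{N}$; applying the hypothesis to the element $f^{n}(Z)$ yields $f^{n}(Z)\subseteq f(f^{n}(Z))=f^{n+1}(Z)$, so that
\[
Z\subseteq f(Z)\subseteq f^{2}(Z)\subseteq\cdots\subseteq f^{n}(Z)\subseteq\cdots
\]
is an ascending chain inside $\wp_{us}(X)$.

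Next I would show that this chain is uniformly supported by the finite set $supp(f)\cup supp(Z)$, arguing by induction on $n$ exactly as in Theorem \ref{th1}. The base case is $supp(f^{0}(Z))=supp(Z)\subseteq supp(f)\cup supp(Z)$. For the inductive step, assuming $supp(f^{n}(Z))\subseteq supp(f)\cup supp(Z)$, take any $\pi\in Fix(supp(f)\cup supp(Z))$; then $\pi\in Fix(supp(f^{n}(Z)))$, whence $\pi\star f^{n}(Z)=f^{n}(Z)$, and since $\pi$ fixes $supp(f)$ pointwise, Proposition \ref{2.18'} gives $\pi\star f^{n+1}(Z)=\pi\star f(f^{n}(Z))=f(\pi\star f^{n}(Z))=f(f^{n}(Z))=f^{n+1}(Z)$. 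Hence $supp(f^{n+1}(Z))\subseteq supp(f)\cup supp(Z)$, completing the induction.

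Now I would invoke the earlier results. By Theorem \ref{lem1}, since $X$ is not FSM uniformly infinite, neither is $\wp_{us}(X)$; therefore the uniformly supported subset $\{f^{n}(Z)\,|\,n\in\mathbb{N}\}$ of $\wp_{us}(X)$ must be finite. An ascending chain (with respect to $\subseteq$) taking only finitely many distinct values is necessarily stationary: there is some $m\in\mathbb{N}$ with $f^{m}(Z)=f^{m+1}(Z)$, and since $f^{m+1}(Z)=f(f^{m}(Z))$, the set $f^{m}(Z)$ is a fixed point of $f$. For item~(2) the identical argument applies verbatim with $\wp_{fin}(X)$ in place of $\wp_{us}(X)$, using Theorem \ref{lem2} instead of Theorem \ref{lem1} to force finiteness of the chain, noting that $\{f^{n}(Z)\,|\,n\in\mathbb{N}\}$ remains in $\wp_{fin}(X)$ and is still uniformly supported by $supp(f)\cup supp(Z)$.

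I expect no serious obstacle, since the mechanism is essentially that of Theorem \ref{th1}. The only points demanding care are the bookkeeping of the uniform support bound, which here is $supp(f)\cup supp(Z)$ rather than $supp(f)\cup supp(X)$ because the iteration starts at the arbitrary $Z$, and the remark that the base point $Z$ has a \emph{finite} support (valid because every element of $\wp_{us}(X)\subseteq\wp_{fs}(Y)$ is finitely supported, so $supp(f)\cup supp(Z)$ is indeed finite). The inflationary property $Z\subseteq f(Z)$ is precisely what replaces the combination of ``$\emptyset\subseteq f(\emptyset)$ plus monotonicity'' used in Theorem \ref{th1} to generate the ascending chain.
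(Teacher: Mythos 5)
Your proposal is correct and follows essentially the same route as the paper's own proof: build the ascending chain $Z\subseteq f(Z)\subseteq f^{2}(Z)\subseteq\cdots$ from the inflationary hypothesis, show by induction via Proposition \ref{2.18'} that it is uniformly supported, and invoke Theorem \ref{lem1} (resp.\ Theorem \ref{lem2}) to force the chain to be finite and hence stationary. The only cosmetic difference is that the paper uses the uniform support bound $supp(f)\cup supp(Z)\cup supp(X)$ whereas you use $supp(f)\cup supp(Z)$; both are finite and both work.
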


\begin{proof} 1. Let us fix an arbitrary element $Z \in \wp_{us}(X)$. We 
consider the ascending (via sets inclusion) sequence $(Z_{n})_{n \in 
\mathbb{N}}$ which has the first term $Z_{0}=Z$ and the general term 
$Z_{n+1}=f(Z_{n})$ for all $n \in \mathbb{N}$.  We prove by induction that 
$supp(Z_{n}) \subseteq supp(f)\cup supp(Z) \cup supp(X)$ for all $n \in 
\mathbb{N}$. Clearly, $supp(Z_{0})=supp(Z) \subseteq supp(f)\cup supp(Z) 
\cup supp(X)$. Assume that $supp(Z_{k}) \subseteq supp(f)\cup supp(Z) \cup 
supp(X)$. Let $\pi \in Fix(supp(f)\cup supp(Z) \cup supp(X))$. Thus, $\pi 
\cdot Z_{k}=Z_{k}$ according to the inductive hypothesis. According to 
Proposition~\ref{2.18'}, because $\pi$ fixes $supp(f)$ pointwise, $supp(f)$ 
supports $f$ and $\wp_{us}(X)$ is supported by $supp(X)$, we get $\pi 
\star Z_{k+1}= \pi \star f(Z_{k})=f(\pi \star Z_{k})=f(Z_{k})=Z_{k+1}$. 
Since $supp(Z_{k+1})$ is the least set supporting $Z_{k+1}$, we obtain 
$supp(Z_{k+1}) \subseteq supp(f)\cup supp(Z) \cup supp(X)$. Thus, 
$(Z_{n})_{n \in \mathbb{N}} \subseteq \wp_{us}(X)$ is uniformly supported 
by $supp(f) \cup supp(Z) \cup supp(X)$, and so $(Z_{n})_{n \in \mathbb{N}}$ 
must be finite according to Theorem \ref{lem1}. Since by hypothesis we 
have $Z_{0} \subseteq Z_{1} \subseteq \ldots \subseteq Z_{n} \subseteq 
\ldots$, there should exist $m \in \mathbb{N}$ such that $Z_{m}=Z_{m+1}$, 
i.e. $f^{m}(Z)=f^{m+1}(Z)=f(f^{m}(Z))$, and so the result follows.

2. A similar argument allows us to prove the second item of this theorem. 
Theorem~\ref{lem2} is used to prove that the uniformly supported 
ascending family $(f^{n}(Z))_{n \in \mathbb{N}} \subseteq \wp_{fin}(X)$ 
is finite, and so it is stationary for every $Z \in \wp_{fin}(X)$.
\end{proof}

For self-mappings on $\wp_{fin}(A)$ we have the following stronger property.  
\begin{proposition} \label{pf1ty} 
Let $f:\wp_{fin}(A)\to \wp_{fin}(A)$ be a finitely supported function with 
the property that $Z \subseteq f(Z)$ for all $Z \in \wp_{fin}(A)$. There 
are infinitely many fixed points of $f$, namely the finite subsets of $A$ 
containing all the elements of $supp(f)$.
\end{proposition}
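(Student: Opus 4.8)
The plan is to show that every finite set $Z \subseteq A$ with $supp(f) \subseteq Z$ is a fixed point of $f$; since $A$ is infinite there are infinitely many such $Z$, which yields the stated conclusion. Because the hypothesis already supplies $Z \subseteq f(Z)$, it suffices to establish the reverse inclusion $f(Z) \subseteq Z$. I would argue by contradiction, supposing there is an atom $a \in f(Z) \setminus Z$, and then manufacture a fresh atom that exposes an asymmetry incompatible with the equivariance of $f$.

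Since $f(Z)$ is a finite subset of the infinite set $A$, I can choose $b \in A \setminus f(Z)$; then automatically $b \notin Z$ (as $Z \subseteq f(Z)$) and $b \neq a$ (as $a \in f(Z)$). Consider the transposition $(a\, b)$. Because neither $a$ nor $b$ lies in $Z$, the transposition fixes every element of $Z$, so $(a\, b) \star Z = Z$ (recall from Proposition~\ref{p1}(6) that $supp(Z) = Z$). Moreover, since $supp(f) \subseteq Z$ while $a, b \notin Z$, we have $a, b \notin supp(f)$ and hence $(a\, b) \in Fix(supp(f))$.

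Now the equivariance of $f$ from Proposition~\ref{2.18'}(2) applies: as $(a\, b)$ fixes $supp(f)$ pointwise, $f((a\, b) \star Z) = (a\, b) \star f(Z)$, and substituting $(a\, b) \star Z = Z$ gives $f(Z) = (a\, b) \star f(Z)$. But $a \in f(Z)$ forces $b = (a\, b)(a) \in (a\, b) \star f(Z) = f(Z)$, contradicting the choice $b \notin f(Z)$. Therefore $f(Z) \subseteq Z$, so $f(Z) = Z$. Finally, every set of the form $supp(f) \cup F$ with $F \subseteq A$ finite contains $supp(f)$, and there are infinitely many such sets, so $f$ admits infinitely many fixed points.

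I expect the main obstacle to be the bookkeeping around the fresh atom: one must verify simultaneously that $b$ lies outside $f(Z)$, outside $Z$, and outside $supp(f)$, so that the single transposition $(a\, b)$ both stabilizes $Z$ and fixes $supp(f)$ pointwise. This is precisely what makes the equivariance identity $f(Z) = (a\, b) \star f(Z)$ available, and it is the hypothesis $supp(f) \subseteq Z$ that guarantees $a, b \notin supp(f)$ and thereby drives the contradiction.
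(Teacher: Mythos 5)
Your proposal is correct and rests on the same mechanism as the paper's proof: equivariance of $f$ under transpositions that fix $supp(f)$ pointwise, combined with the fact that a finite set of atoms is its own support. The only cosmetic difference is that you obtain $f(Z)\subseteq Z$ by a direct fresh-atom contradiction for the specific sets $Z\supseteq supp(f)$, whereas the paper first establishes the general inclusion $f(Z)=supp(f(Z))\subseteq supp(f)\cup Z$ and then specializes; your route is a streamlined instance of the same support argument.
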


\begin{proof}
Let $Z \in \wp_{fin}(A)$. Since the support of a finite subset of atoms 
coincides with the related subset, we have $supp(Z)=Z$ and 
$supp(f(Z))=f(Z)$. According to Proposition \ref{2.18'}, for any 
permutation $\pi \in Fix(supp(f) \cup supp(Z))=Fix(supp(f) \cup Z)$, we have 
$\pi \star f(Z)=f(\pi \star Z)=f(Z)$ which means $supp(f) \cup Z$ supports 
$f(Z)$, that is, $f(Z)=supp(f(Z)) \subseteq supp(f) \cup Z$ (claim 1). 
Since we also have $Z \subseteq f(Z)$, we get $Z\setminus supp(f) \subseteq 
f(Z) \setminus supp(f) \subseteq Z \setminus supp(f)$, that is, $Z 
\setminus supp(f)=f(Z) \setminus supp(f)$ (claim 2). If 
$supp(f)=\emptyset$, the result follows obviously. Let 
$supp(f)=\{a_{1},\ldots,a_{n}\}$. According to (claim 1), we have $supp(f) 
\subseteq f(supp(f)) \subseteq supp(f)$, and so $f(supp(f))=supp(f)$. If 
$Z$ has the form $Z=\{a_{1},\ldots, a_{n},b_{1},\ldots,b_{m}\}$ with 
$b_{1},\ldots,b_{m} \in A \setminus supp(f)$, $m \geq 1$, we should have by 
hypothesis that $a_{1},\ldots,a_{n} \in f(Z)$, and by (claim 2) 
$f(Z)\setminus supp(f)=\{b_{1},\ldots, b_{m}\}$. Since no other elements 
different from $a_{1},\ldots,a_{n}$ are in $supp(f)$, from (claim 1) we get 
$f(Z)=\{a_{1},\ldots, a_{n},b_{1},\ldots,b_{m}\}$. 
\end{proof}

\begin{theorem} \label{ti1} 
The following properties of FSM uniformly infinite sets hold.
\begin{enumerate}
\item Let $X$ be an infinite, finitely supported subset of an invariant set $Y$. Then the sets $\wp_{fs}(\wp_{fin}(X))$ and $\wp_{fs}(T_{fin}(X))$ are FSM uniformly infinite.
\item Let $X$ be an infinite, finitely supported subset of an invariant set $Y$. Then the set $\wp_{fs}(\wp_{fs}(X))$ is FSM uniformly infinite.
\item Let $X$ and $Y$ be two finitely supported subsets of an invariant set $Z$. If neither $X$ nor $Y$ is FSM uniformly infinite, then $X \times Y$ is not FSM uniformly infinite. 
\item Let $X$ and $Y$ be two finitely supported subsets of an invariant set $Z$. If neither $X$ nor $Y$ is FSM uniformly infinite, then $X + Y$ is not FSM uniformly infinite. 
\end{enumerate}
\end{theorem}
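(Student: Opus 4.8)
The plan is to split the four statements into two `positive' existence results, (1) and (2), proved by a stratification of the relevant power object into size-layers, and two `negative' results, (3) and (4), proved by a projection/contradiction argument that transfers uniform supportedness from the compound object to its components.

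For item (1), for each $n \in \mathbb{N}$ I would set $U_{n}=\{Z \in \wp_{fin}(X) : |Z|=n\}$ and observe that $U_{n}$ is supported by $supp(X)$: every $\pi\in Fix(supp(X))$ fixes $X$ (so $\pi\star X=X$), hence permutes $\wp_{fin}(X)$ bijectively while preserving cardinality, whence $\pi\star U_{n}=U_{n}$. As $X$ is infinite, each $U_{n}$ is nonempty and the $U_{n}$ are pairwise distinct, so $\{U_{n} : n \in \mathbb{N}\}$ is an infinite subset of $\wp_{fs}(\wp_{fin}(X))$ whose members are all supported by the single finite set $supp(X)$ --- i.e. an infinite uniformly supported subset, which is exactly what is needed. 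Stratifying $T_{fin}(X)$ by the length of the tuples gives layers $V_{n}$ supported by $supp(X)$, nonempty (since $X$ is infinite) and pairwise distinct, finishing (1). Item (2) then follows at once from (1): since $\wp_{fin}(X)\subseteq\wp_{fs}(X)$ we have $\wp_{fs}(\wp_{fin}(X))\subseteq\wp_{fs}(\wp_{fs}(X))$, supports being computed identically in the larger set, so the family $(U_{n})_{n \in \mathbb{N}}$ witnesses that $\wp_{fs}(\wp_{fs}(X))$ is FSM uniformly infinite.

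For item (3) I would argue by contradiction, assuming an infinite $W\subseteq X\times Y$ all of whose elements are supported by one finite set $S$. The first step is a support computation for the product action: if $(x,y)$ is supported by $S$ then for each $\pi\in Fix(S)$ the equality $(\pi\cdot x,\pi\diamond y)=\pi\otimes(x,y)=(x,y)$ forces $\pi\cdot x=x$ and $\pi\diamond y=y$, so $x$ and $y$ are each supported by $S$. Hence the component sets $W_{X}=\{x : (x,y)\in W \text{ for some } y\}$ and $W_{Y}=\{y : (x,y)\in W \text{ for some } x\}$ are uniformly supported by $S$; since neither $X$ nor $Y$ is FSM uniformly infinite, both are finite, and then $W\subseteq W_{X}\times W_{Y}$ is finite --- a contradiction. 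Item (4) is handled analogously: an infinite uniformly supported $W\subseteq X+Y$ decomposes as $W=(W\cap(\{0\}\times X))\cup(W\cap(\{1\}\times Y))$, at least one piece being infinite, and using $supp((0,x))=supp(x)$ and $supp((1,y))=supp(y)$ (the tags $0,1$ being equivariant) the corresponding set of components is an infinite uniformly supported subset of $X$ or of $Y$, again contradicting the hypothesis.

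The routine work lies in the cardinality and length bookkeeping of (1) and the splitting in (4). The step that requires care --- and which I expect to be the conceptual crux of (3) and (4) --- is the support-transfer observation that a common support of a pair (or of a tagged element) is simultaneously a support of each component. This is exactly what lets the projections inherit uniform supportedness, and hence what allows the hypotheses on $X$ and $Y$ to be invoked; once it is in place, both negative statements close immediately by finiteness of a product (resp. of a finite union) of finite sets.
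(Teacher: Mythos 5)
Your proposal is correct and follows essentially the same route as the paper: the size/length stratification $U_{n}$, $V_{n}$ supported by $supp(X)$ is exactly the paper's family $X_{i}$ of $i$-sized subsets (resp.\ $Y_{i}$ of $i$-sized injective tuples), item (2) is obtained by the same inclusion $\wp_{fin}(X)\subseteq\wp_{fs}(X)$, and items (3) and (4) use the same support-transfer observation that a support of a pair (or tagged element) supports each component, followed by the same contradiction. The only cosmetic difference is that in (3) you close the argument via finiteness of $W_{X}\times W_{Y}$ whereas the paper notes that at least one coordinate family must be infinite; these are interchangeable.
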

\begin{proof}
1. Obviously, $\wp_{fin}(X)$ is a finitely supported subset of the invariant 
set $\wp_{fs}(Y)$, supported by $supp(X)$. This is because whenever $Z$ is 
an element of $\wp_{fin}(X)$ (i.e. whenever $Z$ is a finite subset of $X$) 
and $\pi$ fixes $supp(X)$ pointwise, we have that $\pi \star Z$ is also a 
finite subset of $X$. The family $\wp_{fs}(\wp_{fin}(X))$ represents the 
family of those subsets of $\wp_{fin}(X)$ which are finitely supported as 
subsets of the invariant set $\wp_{fs}(Y)$ in the sense of Definition 
\ref{2.14}. As above, according to Proposition \ref{p1}, we have that 
$\wp_{fs}(\wp_{fin}(X))$ is a finitely supported subset of the invariant 
set $\wp_{fs}(\wp_{fs}(Y))$, supported by $supp(\wp_{fin}(X)) \subseteq 
supp(X)$.

Let $X_{i}$ be the set of all $i$-sized subsets from $X$, i.e. $X_{i}=\{Z 
\subseteq X\,|\,|Z|=i\}$. Since $X$ is infinite, it follows that each 
$X_{i}, i \geq 1$ is non-empty. Obviously, we have that any $i$-sized 
subset $\{x_{1}, \ldots, x_{i}\}$ of $X$ is finitely supported (as a 
subset of $Y$) by $supp(x_{1}) \cup \ldots \cup supp(x_{i})$. Therefore, 
$X_{i} \subseteq \wp_{fin}(X)$ and $X_{i} \subseteq \wp_{fs}(Y)$ for all 
$i \in \mathbb{N}$. Since $\cdot$ is a group action, the image of an 
$i$-sized subset of $X$ under an arbitrary permutation is an $i$-sized 
subset of $Y$. However, any permutation of atoms that fixes $supp(X)$ 
pointwise also leaves~$X$ invariant, and so for any permutation $\pi \in 
Fix(supp(X))$ we have that $\pi \star Z$ is an $i$-sized subset of $X$ 
whenever $Z$ is an $i$-sized subset of $X$. Thus, each $X_{i}$ is a subset 
of $\wp_{fin}(X)$ finitely supported by $supp(X)$, and so $X_{i} \in 
\wp_{fs}(\wp_{fin}(X))$. The family $(X_{i})_{i \in \mathbb{N}}$ is 
infinite and uniformly supported.

If we consider $Y_{i}$ the set of all $i$-sized injective tuples formed by 
elements of $X$, we have that each $Y_{i}$ is a subset of $T_{fin}(X)$ 
supported by $supp(X)$, and the family $(Y_{i})_{i \in \mathbb{N}}$ is an 
infinite, uniformly supported, subset of $\wp_{fs}(T_{fin}(X))$.

2. The proof is actually the same as in the above item since every $X_{i} \in \wp_{fs}(\wp_{fs}(X))$.

3. Suppose, by contradiction, that $X \times Y$ is FSM uniformly infinite.  
Thus, there exists an infinite injective family $((x_{i},y_{i}))_{i \in I} 
\subseteq X \times Y$ and a finite $S \subseteq A$ with the property that 
$supp((x_{i}, y_{i})) \subseteq S$ for all $i \in I$ (1). Fix some $j \in 
I$. We claim that $supp((x_{j}, y_{j})) =supp(x_{j}) \cup supp(y_{j})$.  
Let $U=(x_{j}, y_{j})$, and $S=supp(x_{j})\cup supp(y_{j})$. Obviously, $S$ 
supports $U$. Indeed, let us consider $\pi\in Fix(S)$. We have that $\pi\in 
Fix(supp(x_{j}))$ and also $\pi\in Fix(supp(y_{j}))$ Therefore, $\pi\cdot 
x_{j}=x_{j}$ and $\pi\cdot y_{j}=y_{j}$, and so $\pi \otimes (x_{j}, 
y_{j})=(\pi \cdot x_{j}, \pi \cdot y_{j})=(x_{j}, y_{j})$, where $\otimes$ 
represent the $S_{A}$ action on $X \times Y$ described in Proposition 
\ref{p1}. Thus, $supp(U) \subseteq S$. It remains to prove that $S 
\subseteq supp(U)$. Fix $\pi \in Fix(supp(U))$. Since $supp(U)$ supports 
$U$, we have $\pi \otimes (x_{j}, y_{j})=(x_{j}, y_{j})$, and so $(\pi 
\cdot x_{j}, \pi \cdot y_{j})=(x_{j}, y_{j})$, from which we get $\pi \cdot 
x_{j}=x_{j}$ and $\pi \cdot y_{j}= y_{j}$. Thus, $supp(x_{j}) \subseteq 
supp(U)$ and $supp(y_{j}) \subseteq supp(U)$. Hence $S=supp(x_{j})\cup 
supp(y_{j}) \subseteq supp(U)$.

According to relation (1) we obtain, $supp(x_{i})\cup supp(y_{i}) \subseteq 
S$ for all $i \in I$. Thus, $supp(x_{i}) \subseteq S$ for all $i \in I$ and 
$supp(y_{i}) \subseteq S$ for all $i \in I$ (2). Since the family 
$((x_{i},y_{i}))_{i \in I}$ is infinite and injective, then at least one of 
the uniformly supported families $(x_{i})_{i \in I}$ and $(y_{i})_{i \in 
I}$ is infinite, a contradiction.

4.  Suppose, by contradiction, that $X + Y$ is FSM uniformly infinite. 
Thus, there exists an infinite injective family $(z_{i})_{i \in I} 
\subseteq X\times Y$ and a finite $S \subseteq A$ such that $supp(z_{i}) 
\subseteq S$ for all $i \in I$.  According to the construction of the 
disjoint union of two $S_{A}$-sets (see Proposition \ref{p1}), there should 
exist an infinite family of $(z_{i})_{i}$ of form $((0, x_{j}))_{x_{j} \in 
X}$ which is uniformly supported by $S$, or an infinite family of form 
$((1, y_{k}))_{y_{k} \in Y}$ which is uniformly supported by $S$.  Since 
$0$ and $1$ are constants, this means there should exist at least an 
infinite uniformly supported family of elements from $X$, or an infinite 
uniformly supported family of elements from $Y$, a contradiction. 
\end{proof}

The following result represents a significant extension of Theorem 2 in 
\cite{AFML} since we are able to prove that $\wp_{fs}(A)^{A}_{fs}$ does not 
contain an infinite uniformly supported subset (an so, neither one of its 
subsets such as $S_{A}$ or $A^{A}_{fs}$ does not contain an infinite 
uniformly supported subset).

\begin{theorem} \label{ti2}
All the sets presented below are FSM non-uniformly infinite (i.e. none of them contains infinite uniformly supported subsets).
\begin{enumerate}
\item The invariant set $A$ of atoms.
\item The powerset $\wp_{fs}(A)$ of the set of atoms.
\item The set $T_{fin}(A)$ of all finite injective tuples of atoms.
\item  The invariant set of all finitely supported functions $f:A \to \wp_{fs}(A)$.
\item The invariant set $A^{A}_{fs}$ of all finitely supported functions from $A$ to $A$.
\item The invariant set of all finitely supported functions $f:A \to A^{n}$, where $n \in \mathbb{N}$ and $A^{n}$ is the $n$-times Cartesian product of $A$.
\item  The invariant set of all finitely supported functions $f:A \to T_{fin}(A)$.
\item The sets
 $\wp_{fin}(A)$, $\wp_{cofin}(A)$, $\wp_{fin}(\wp_{fs}(A))$, or $\wp_{fin}(A^{A}_{fs})$.
\item Any construction of finite powersets of the following forms
$\wp_{fin}(\ldots \wp_{fin}(A))$, $\wp_{fin}(\ldots \wp_{fin}(A^{A}_{fs}))$, or $\wp_{fin}(\ldots \wp_{fin}(\wp_{fs}(A)))$.
\item Every finite Cartesian combination between the set $A$, $\wp_{fin}(A)$, $\wp_{cofin}(A)$, $\wp_{fs}(A)$ and $A^{A}_{fs}$.
\item The disjoint unions $A+A^{A}_{fs}$, $A+\wp_{fs}(A)$,  $\wp_{fs}(A)+A^{A}_{fs}$ and $A+\wp_{fs}(A)+A^{A}_{fs}$ and all finite disjoint unions between  $A$, $A^{A}_{fs}$ and  $\wp_{fs}(A)$.
\end{enumerate}
\end{theorem}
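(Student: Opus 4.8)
The plan is to reduce every item to a single counting criterion: a finitely supported subset $X$ of an invariant set is FSM non-uniformly infinite if and only if, for every finite $S \subseteq A$, the set $X_{S} := \{x \in X \mid supp(x) \subseteq S\}$ is finite. One direction is immediate, since an infinite uniformly supported subset supported by $S$ is an infinite subset of $X_{S}$; the converse is equally direct, as $X_{S}$ is itself uniformly supported by $S$. With this reformulation, I would first treat the three genuinely basic sets (items 1--3) by computing $X_{S}$ explicitly. For $A$ we have $supp(a)=\{a\}$, so $A_{S}=S$ is finite. For $\wp_{fs}(A)=\wp_{fin}(A)\cup\wp_{cofin}(A)$, Proposition \ref{p1}(6) gives $supp(Z)=Z$ on finite sets and $supp(Z)=A\setminus Z$ on cofinite sets, so the elements supported by $S$ are exactly the subsets of $S$ together with the complements of subsets of $S$, at most $2^{|S|+1}$ of them. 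For $T_{fin}(A)$, an injective tuple is supported precisely by the finite set of its entries, so a tuple supported by $S$ has all entries in $S$ and length at most $|S|$, again finitely many.

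Next I would isolate three closure lemmas that generate the remaining items from the base cases. \emph{(i) Injections reflect the property:} if $g\colon X\to Y$ is a finitely supported injection and $Y$ is FSM non-uniformly infinite, then so is $X$, because the image of an infinite $S$-supported family is an infinite family in $Y$ uniformly supported by $S\cup supp(g)$ (using Proposition \ref{p1}(2)). \emph{(ii) Finite powersets:} if $X$ is FSM non-uniformly infinite then so is $\wp_{fin}(X)$, since a finite subset is uniformly supported and, by the computation in the proof of Theorem \ref{lem1}, satisfies $supp(Z)=\bigcup_{x\in Z}supp(x)$; hence $Z$ supported by $S$ forces $Z\subseteq X_{S}$, and there are only finitely many subsets of the finite set $X_{S}$. \emph{(iii) Products and disjoint unions} are already handled by Theorem \ref{ti1}(3),(4), so finite iterations remain in the class by induction.

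The heart of the argument, and the step I expect to be the main obstacle, is the function-space lemma behind items 4--7: \emph{if $Y$ is FSM non-uniformly infinite, then $Y^{A}_{fs}$ is FSM non-uniformly infinite.} Fixing a finite $S$ and a function $f\colon A\to Y$ supported by $S$, I would show $f$ is determined by finitely much data. The key structural fact is that $Fix(S)$ fixes each atom of $S$ and acts transitively on $A\setminus S$ (any two atoms outside $S$ are swapped by a transposition lying in $Fix(S)$). Using Proposition \ref{2.18'}, one checks that $f(a)$ is supported by $S$ for $a\in S$ and that $f(a_{0})$ is supported by $S\cup\{a_{0}\}$ for a fixed representative $a_{0}\in A\setminus S$; conversely, the single value $f(a_{0})$ together with the $S$-supportedness of $f$ determines $f$ on all of $A\setminus S$ via $f(\pi(a_{0}))=\pi\,\widetilde{\star}\,f(a_{0})$, which is well defined precisely because $f(a_{0})$ is $S\cup\{a_{0}\}$-supported. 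Since $Y$ is non-uniformly infinite, the value sets $Y_{S}$ and $Y_{S\cup\{a_{0}\}}$ are finite, so the number of $S$-supported $f$ is at most $|Y_{S}|^{|S|}\cdot|Y_{S\cup\{a_{0}\}}|$, which is finite. Applying this with $Y=\wp_{fs}(A)$, $Y=A$, $Y=A^{n}$ and $Y=T_{fin}(A)$ (the latter being non-uniformly infinite by the base cases, with a trivial count for $A^{n}$) yields items 4--7. Items 8 and 9 then follow by combining the base cases with closure lemma (ii) and induction, while items 10 and 11 follow from Theorem \ref{ti1}(3),(4), the $A^{A}_{fs}$-based constructions additionally using that $A^{A}_{fs}$ is non-uniformly infinite (item 5). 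The delicate point throughout is the well-definedness and $S$-supportedness of the reconstructed function, i.e. verifying that the transitivity of $Fix(S)$ on $A\setminus S$ genuinely collapses the infinitely many arguments into finitely many orbits of data.
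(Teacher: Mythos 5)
Your proposal is correct, and for the core items (4--7) it takes a genuinely different and substantially shorter route than the paper. Items 1--3 and 8--11 you handle essentially as the paper does: the same explicit counts of $S$-supported elements, plus closure of the class under finitely supported injections, under $\wp_{fin}$ (which is Theorem \ref{lem2}), and under finite products and disjoint unions (Theorem \ref{ti1}(3),(4)). The divergence is in the function spaces. The paper has no analogue of your general lemma that $Y$ FSM non-uniformly infinite implies $Y^{A}_{fs}$ FSM non-uniformly infinite; instead it argues separately for each codomain, stratifying $\wp_{fs}(A)$ via $\wp_{fin}(A)=\bigcup_{n}\wp_{n}(A)$ and $\wp_{cofin}(A)$, counting the $S$-supported functions $A\to\wp_{n}(A)$ explicitly ($C_{|S|}^{n-1}$ or $C_{|S|}^{n}$ choices according to whether $a\in g(a)$), then ruling out an infinite uniformly supported family spread over infinitely many size strata by a cardinality-of-support argument, and finally treating $A^{A}_{fs}$, $(A^{n})^{A}_{fs}$ and $T_{fin}(A)^{A}_{fs}$ by separate injections and a parallel argument. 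Your lemma collapses all of this: an $S$-supported $f$ is determined by $f|_{S}$ (with values in the finite set $Y_{S}$, by Proposition \ref{2.18'}(2)) together with $f(a_{0})$ for one representative $a_{0}\in A\setminus S$ (a value in the finite set $Y_{S\cup\{a_{0}\}}$), since $f(b)=(a_{0}\,b)\cdot f(a_{0})$ for every $b\in A\setminus S$. This is both more general --- items 4, 5, 6 and 7 become corollaries of items 2, 1, 10 and 3 respectively --- and it avoids the paper's delicate bookkeeping over the size strata of $\wp_{fin}(A)$. The only point to state fully explicitly in a write-up is the injectivity of $f\mapsto(f|_{S},f(a_{0}))$ on the set of $S$-supported functions, which is immediate from the transposition identity above; the well-definedness discussion you append is not actually needed for the counting direction, only for reconstructing functions, which the argument does not require.
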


\begin{proof}
1. $A$ does not contain an infinite uniformly supported subset since for any finite set $S \subseteq A$ there are at most $|S|$ atoms supported by $S$, namely the elements of $S$.

2. $\wp_{fs}(A)$ does not contain an infinite uniformly supported subset since for any finite set $S \subseteq A$ there are at most $2^{|S|+1}$ subsets 
of $A$ supported by a certain finite set $S \subseteq A$, namely the 
subsets of $S$ and the supersets of $A \setminus S$.

3. $T_{fin}(A)$ does not contain an infinite uniformly supported subset 
because the finite injective tuples of atoms supported by a finite set $S$ are 
only those injective tuples formed by elements of $S$, being at most 
$1+A_{|S|}^{1}+A_{|S|}^{2}+\ldots+A_{|S|}^{|S|}$ such tuples, where 
$A_{n}^{k}=n(n-1)\ldots (n-k+1)$.

4. We prove that $\wp_{fs}(A)^{A}_{fs}$ does not contain 
infinite uniformly supported subsets.

We remark that if $S=\{s_{1},\ldots,s_{n}\}$ is a finite subset 
of an invariant set $(X, \cdot)$ containing no infinite uniformly 
supported subset, then $X^{S}_{fs}$ does not contain an infinite uniformly 
supported subset. For this we claim that there is an injection 
$\varphi$ from~$X^{S}_{fs}$ into~$X^{|S|}$ defined by: if $f \in 
X^{S}_{fs}$, then $\varphi(f)=(f(s_{1}),\ldots, f(s_{n}))$; if $\pi$ 
fixes $supp(s_{1}) \cup \ldots \cup supp(s_{n})$ pointwise, then $\varphi(\pi 
\widetilde{\star} f)=((\pi \widetilde{\star} f)(s_{1}),\ldots,(\pi 
\widetilde{\star} f)(s_{n}))=(\pi \cdot f(\pi^{-1} \cdot s_{1}),\ldots, 
\pi \cdot f(\pi^{-1} \cdot s_{n}))=(\pi \cdot f( s_{1}),\ldots, \pi \cdot 
f(s_{n}))$ $=\pi \otimes \varphi(f)$ for all $f \in X^{S}_{fs}$, where 
$\otimes$ is the $S_{A}$-action on~$X^{|S|}$, and $\widetilde{\star}$ is the canonical action on $X^{S}_{fs}$. Therefore~$\varphi$ is finitely supported. Obviously, $X^{|S|}$ 
does not contain an infinite uniformly supported subset; otherwise $X$ 
should contain itself an infinite uniformly supported subset.

Let us fix $n \in \mathbb{N}$. Assume, by contradiction, that there exist 
infinitely many functions $g:A \to \wp_{n}(A)$ (where $\wp_{n}(A)$ is the 
invariant set of all $n$-sized subsets of $A$) supported by the same 
finite set $S' \subseteq A$. Each $S'$-supported function $g:A \to 
\wp_{n}(A)$ can be uniquely decomposed into two $S'$-supported functions 
$g|_{S'}$ and $g|_{A \setminus S'}$ (this follows since both $S'$ and $A \setminus S'$ are supported by $S'$). Since there exist only finitely many functions from $S'$ to 
$\wp_{n}(A)$ supported by $S'$, there should exist infinitely many 
functions $g:(A\setminus S') \to \wp_{n}(A)$ supported by $S'$. For such a 
function $g$, let us fix an element $a\in A \setminus S'$. For each $\pi$ fixing $S' \cup \{a\}$ pointwise we have $\pi \star g(a)=g(\pi(a))=g(a)$ which 
means that $g(a)$ is supported by $S' \cup \{a\}$. Since $g(a)$ is an $n$-sized 
(i.e. finite) subset of atoms, we have 
$g(a)=supp(g(a)) \subseteq S' \cup \{a\}$. We distinguish two cases. In 
the first case, $g(a)=\{a,x_{2},\ldots, x_{n}\}$ with $x_{2}, \ldots, 
x_{n} \in S'$. Let $b$ be an arbitrary element from $A\setminus S'$, and 
so $(a\, b)$ fixes $S'$ pointwise, which means $g(b)=g((a\,b)(a))=(a\,b) \star 
g(a)=(a\,b) \star \{a, x_{2}, \ldots, x_{n}\}=\{(a\,b)(a), 
(a\,b)(x_{2}),\ldots, (a\,b)(x_{n})\}=\{b,x_{2},\ldots, x_{n}\}$. Thus, 
only the choice of $x_{2}, \ldots, x_{n}$ provides the distinction between 
$g$'s.  Since $S'$ is finite, $\{x_{2}, \ldots, x_{n}\}$ can be selected 
in $C_{|S'|}^{n-1}$ ways if $|S'|\geq n-1$, or in $0$ ways otherwise. In 
the second case we have $g(a)=\{x_{1},\ldots, x_{n}\}$ with $x_{1}, 
\ldots, x_{n} \in S'$.  For all $b \in A \setminus S$ we have that $(a\,b)$ fixes $S'$ pointwise, and so $g(b)=g((a\,b)(a))=(a\,b) \star g(a)=(a\,b) \star 
\{x_{1}, \ldots, x_{n}\}=\{x_{1},\ldots, x_{n}\}$. Since $S'$ is finite, 
$\{x_{1}, \ldots, x_{n}\}$ can be selected in $C_{|S'|}^{n}$ ways if
$|S'|\geq n$, or in~$0$ ways otherwise. In both cases, $g$'s can be 
defined only in finitely many ways.

We proved that there exist at most finitely many functions from $A$ to 
$\wp_{n}(A)$ supported by the same set of atoms. Let us assume by 
contradiction that $\wp_{fin}(A)^{A}$ contains an infinite $S$-uniformly 
supported subset. If $f:A \to \wp_{fin}(A)$ is a function supported by 
$S$, then we have $|f(a)|=|(a\,b)\star f(a)|=|f((a\,b)(a))|=|f(b)|$ for 
all $a,b \notin S$.  As above, each $S$-supported function $f:A \to 
\wp_{fin}(A)$ is uniquely decomposed into two $S$-supported functions 
$f|_{S}$ and $f|_{A \setminus S}$. However $f(A\setminus S) \subseteq 
\wp_{n}(A)$ for some $n \in \mathbb{N}$. We also know that there are at 
most finitely many $S$-supported functions from $S$ to $\wp_{fin}(A)$. 
Furthermore, there exist at most finitely many $S$-supported functions 
from $A\setminus S$ to $\wp_{n}(A)$ for each fixed $n \in \mathbb{N}$.  
Therefore, it should exist an infinite subset $M \subseteq \mathbb{N}$ 
such that we have at least one $S$-supported function $f:A\setminus S \to 
\wp_{k}(A)$ for any $k \in M$. Fix $a \in A \setminus S$. For each of the 
above $f$'s (that form an $S$-uniformly supported family $\mathcal{F}$) 
we have that $f(a)$'s form an uniformly supported family (by $S \cup 
\{a\}$) of $\wp_{fin}(A)$. If $S \cup \{a\}$ has $l$ elements, there 
exists a fixed $m\in M$ with $m>l$. However, $f(a)$ for a function 
$f:A\setminus S \to \wp_{m}(A)$ from $\mathcal{F}$, which is an $m$-sized 
subset of atoms cannot be supported by $S \cup \{a\}$ whose cardinality 
is less than~$m$. Therefore, the set of all $f(a)$'s cannot be infinite 
and uniformly supported.

Since there exists the empty supported bijection $X \mapsto A\setminus X$ 
from $\wp_{fin}(A)$ onto $\wp_{cofin}(A)$, we also have that there exist 
at most finitely many $S$-supported functions from $A$ to 
$\wp_{cofin}(A)$. Assume, by contradiction, that $\wp_{fs}(A)^{A}$ 
contains an infinite $S$-uniformly supported subset.  If $h:A \to 
\wp_{fs}(A)$ is a function supported by $S$, then consider $h(a)=X$ for 
some $a \in A \setminus S$. For $b \in A \setminus S$ we have 
$h(b)=(a\,b)\star X$, which means $h(A \setminus S)$ is formed only by 
finite subsets of atoms if $X$ is finite, and $h(A \setminus S)$ is formed 
only by cofinite subsets of atoms if $X$ is cofinite. However, we have at 
most finitely many $S$-supported functions from $S$ to $\wp_{fs}(A)$. 
Furthermore, we have at most finitely many $S$-supported functions from 
$A\setminus S$ to $\wp_{fin}(A)$, and at most finitely many $S$-supported 
functions from $A\setminus S$ to $\wp_{cofin}(A)$. We get a contradiction, 
and we conclude that $\wp_{fs}(A)^{A}_{fs}$ does not contain an infinite 
uniformly supported subset. 

5. There is an equivariant injection from $A^{A}_{fs}$ into $\wp_{fs}(A)^{A}_{fs}$, and the result is immediate.

6. There is an equivariant bijection between $(A^{n})^{A}_{fs}$ and 
$(A^{A}_{fs})^{n}$ defined as below. If $f:A \to A^{n}$ is a finitely 
supported function with $f(a)=(a_{1},\ldots, a_{n})$, we associate to $f$ 
the Cartesian pair $(f_{1},\ldots, f_{n})$ where for each $i \in \mathbb{N}$, 
$f_{i}:A \to A$ is defined by $f_{i}(a)=a_{i}$ for all $a \in  A$. Since 
$A^{A}_{fs}$ does not contain an infinite uniformly supported subset, neither 
$(A^{A}_{fs})^{n}$ contains an infinite uniformly supported subset.

7. Assume by contradiction that $T_{fin}(A)^{A}$ contains an infinite 
$S$-uniformly supported subset. If $f:A \to T_{fin}(A)$ is a function 
supported by $S$, then consider $f(a)=x$ for some $a \notin S$. For $b 
\notin S$ we have that $(a\,b)$ fixes $S$ pointwise, and so 
$f(b)=f((a\,b)(a))=(a\,b)\otimes f(a)=(a\,b)\otimes x$ which means 
$|f(a)|=|f(b)|$ for all $a,b \notin S$.  Each $S$-supported function $f:A 
\to T_{fin}(A)$ can be uniquely decomposed into two $S$-supported functions 
$f|_{S}$ and $f|_{A \setminus S}$. However $f(A\setminus S) \subseteq 
A'^{n}$ for some $n \in \mathbb{N}$, where $A'^{n}$ is the set of all 
injective $n$-tuples of $A$. We have at most finitely many $S$-supported 
functions from $S$ to $T_{fin}(A)$ (since $T_{fin}(A)^{S}$ cannot contain 
an infinite uniformly supported subset; otherwise $T_{fin}(A)$ would itself 
contain an infinite uniformly supported subset). Since $A'^{n}$ is a subset 
of $A^{n}$ and $A \setminus S$ is a subset of $A$, we have at most finitely 
many $S$-supported functions from $A\setminus S$ to $A'^{n}$ for each fixed 
$n \in \mathbb{N}$.  Therefore, there should exist an infinite subset $M 
\subseteq \mathbb{N}$ such that we have at least one $S$-supported function 
$g:A\setminus S \to A'^{k}$ for any $k \in M$. Fix $a \in A \setminus S$. 
For each of the above $g$'s (that form an $S$-supported family 
$\mathcal{F}$) we have that $g(a)$'s form an uniformly supported family (by 
$S \cup \{a\}$) of $T_{fin}(A)$, which is also infinite because tuples 
having different cardinalities are different and $M$ is infinite. We thus 
obtained a contradiction.

Items 8,9,10,11 follow from the above items involving Theorem \ref{ti1}
\end{proof}

\begin{remark}
Despite of Theorem \ref{ti2}(3), it is worth noting that the set 
$T^{\delta}_{fin}(A)=\underset{n\in \mathbb{N}}{\cup}A^{n}$ of all finite 
tuples of atoms (not necessarily injective) is FSM uniformly infinite. This 
follows as below. Fix $a \in A$ and $i \in \mathbb{N}$. We consider the 
tuple $x_{i}=(a,\ldots,a) \in A^{i}$. Clearly, $x_{i}$ is supported by 
$\{a\}$ for each $i \in \mathbb{N}$, and so $(x_{n})_{n \in \mathbb{N}}$ is 
a uniformly supported subset of $T^{\delta}_{fin}(A)$. 
\end{remark}

\begin{theorem} \label{propro} \ 
\begin{enumerate}
\item Let $X$ be a finitely supported subset of an invariant set.  
If $X$ is not FSM uniformly infinite, then each finitely supported 
injective mapping $f:X \to X$ should be surjective.

\item Let $X$ be a finitely supported subset of an invariant set.  
If $\wp_{fs}(X)$ is not FSM uniformly infinite, then each finitely 
supported surjective mapping $f:X \to X$ should be injective. 
The converse does not hold since every finitely supported surjective 
mapping $f:\wp_{fin}(A) \to \wp_{fin}(A)$ is also injective, while 
$\wp_{fs}(\wp_{fin}(A))$ is FSM uniformly infinite.
\end{enumerate}
\end{theorem}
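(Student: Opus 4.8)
The plan is to prove the two items separately, reducing item~2 to item~1, and then to establish the ``converse fails'' claim by a separate, more hands‑on argument.

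\emph{Item 1.} I would argue in the spirit of Dedekind finiteness, by contraposition. Suppose $f:X\to X$ is finitely supported and injective but not surjective, and fix $x_{0}\in X\setminus f(X)$. First I form the iterates $f^{n}(x_{0})$ and note they are pairwise distinct: if $f^{i}(x_{0})=f^{j}(x_{0})$ with $i<j$, injectivity gives $x_{0}=f^{j-i}(x_{0})\in f(X)$, a contradiction. Next, using the equivariance of $f$ on $Fix(supp(f))$ from Proposition~\ref{2.18'}(2), a straightforward induction shows $supp(f^{n}(x_{0}))\subseteq supp(f)\cup supp(x_{0})$ for every $n$. Hence $\{f^{n}(x_{0})\mid n\in\mathbb{N}\}$ is an infinite uniformly supported subset of $X$, contradicting that $X$ is not FSM uniformly infinite.

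\emph{Item 2.} The idea is to apply item~1 to the powerset via the preimage operator. I define $g:\wp_{fs}(X)\to \wp_{fs}(X)$ by $g(Z)=f^{-1}(Z)=\{x\in X\mid f(x)\in Z\}$. One checks that $f^{-1}(Z)$ is supported by $supp(f)\cup supp(Z)$, so $g$ is well defined and finitely supported by $supp(f)$; and since $f$ is surjective we have $f(f^{-1}(Z))=Z$, so $g$ is injective. Because $\wp_{fs}(X)$ is a finitely supported subset of the invariant set $\wp_{fs}(Y)$ and is not FSM uniformly infinite by hypothesis, item~1 applied to $g$ shows $g$ is surjective. Finally, if $f$ were not injective, say $f(a)=f(b)$ with $a\neq b$, surjectivity of $g$ would provide $Z$ with $f^{-1}(Z)=\{a\}$; but then $f(b)=f(a)\in Z$ forces $b\in f^{-1}(Z)=\{a\}$, a contradiction. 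So $f$ is injective.

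\emph{The converse fails.} That $\wp_{fs}(\wp_{fin}(A))$ is FSM uniformly infinite is precisely Theorem~\ref{ti1}(1) with $X=A$, so the only substantial task is the direct claim that every finitely supported surjection $f:\wp_{fin}(A)\to \wp_{fin}(A)$ is injective. Writing $S=supp(f)$, the computation of Proposition~\ref{pf1ty} (its claim~1) gives $supp(f(Z))=f(Z)\subseteq S\cup Z$ for every finite $Z$. The crucial structural observation is that the image behaves in an all‑or‑nothing way on the ``free part'' $B=Z\setminus S$: any transposition of two atoms of $B$ lies in $Fix(S)$ and fixes $Z$, hence fixes $f(Z)$, so $f(Z)\setminus S$ is either $\emptyset$ or $B$, and both this choice and $f(Z)\cap S$ depend only on the orbit $(Z\cap S,\,|B|)$ of $Z$ under $Fix(S)$. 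Thus $f$ is determined by an induced map on pairs $(P,m)$ with $P\subseteq S$ and $m\in\mathbb{N}$. I would then read off surjectivity orbit‑wise: since a surjection from a subset of the finite set $\wp(S)$ onto $\wp(S)$ must be a bijection, hitting every orbit with $m\geq 1$ forces the free part to be preserved and $P\mapsto f(P\cup B)\cap S$ to be a bijection of $\wp(S)$ for each such $m$, and the same holds for $m=0$. A short case analysis—according to whether two finite sets differ in their number of free atoms, in their free atoms, or only in their $S$‑part—then yields injectivity.

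The routine ingredients (the equivariance inductions and the well‑definedness of the preimage operator) are mechanical. The main obstacle is the converse‑fails claim: unlike items~1 and~2, it cannot be extracted from the uniform‑infiniteness machinery and instead requires the explicit description of finitely supported self‑maps of $\wp_{fin}(A)$—in particular the all‑or‑nothing behaviour on free atoms and the passage from global surjectivity to bijectivity of the induced maps on $\wp(S)$ at every level~$m$.
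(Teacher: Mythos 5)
Your proposal is correct and follows essentially the same route as the paper on all three parts: the Dedekind-style iterate argument with $supp(f^{n}(x_{0}))\subseteq supp(f)\cup supp(x_{0})$ for item 1, the preimage operator $g(Z)=f^{-1}(Z)$ reducing item 2 to item 1, and, for the failure of the converse, the combination of $f(Z)\subseteq supp(f)\cup Z$, the transposition argument forcing preimages to have the same free part, and a finite pigeonhole on each family of sets with a fixed free part (your orbit phrasing is just a repackaging of the paper's families $\mathcal{U}$, $\mathcal{V}$, $\mathcal{T}$). The only nitpick is that $g$ should be recorded as supported by $supp(f)\cup supp(X)$ rather than by $supp(f)$ alone, since a permutation moving $supp(X)$ need not preserve $\wp_{fs}(X)$; this does not affect the argument.
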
 

\begin{proof}
1. Assume, by contradiction, that $f: X \rightarrow X$ is a finitely 
supported injection with the property that $Im(f) \subsetneq X$. This means 
that there exists $x_{0}\in X$ such that $x_{0}\notin Im(f)$.  We can form 
a sequence of elements from~$X$ which has the first term $x_{0}$ and the 
general term $x_{n+1}=f(x_{n})$ for all $n \in \mathbb{N}$.  Since 
$x_{0}\notin Im(f)$ it follows that $x_{0} \neq f(x_{0})$. Since $f$ is 
injective and $x_{0} \notin Im(f)$, by induction we obtain that 
$f^{n}(x_{0}) \neq f^{m}(x_{0})$ for all $n,m \in \mathbb{N}$ with $n \neq 
m$. Furthermore, $x_{n+1}$ is supported by $supp(f)\cup supp(x_{n})$ for 
all $n \in \mathbb{N}$. Indeed, let $\pi \in Fix(supp(f)\cup supp(x_{n}))$. 
According to Proposition~\ref{2.18'}, $\pi \cdot x_{n+1}= \pi \cdot 
f(x_{n})=f(\pi \cdot x_{n})=f(x_{n})=x_{n+1}$. Since $supp(x_{n+1})$ is the 
least set supporting $x_{n+1}$, we obtain $supp(x_{n+1}) \subseteq 
supp(f)\cup supp(x_{n})$ for all $n \in \mathbb{N}$. By induction, we have 
$supp(x_{n}) \subseteq supp(f)\cup supp(x_{0})$ for all $n\in\mathbb{N}$. 
Thus, all $x_{n}$ are supported by the same set of atoms $supp(f)\cup 
supp(x_{0})$, which means the family $(x_{n})_{n \in \mathbb{N}}$ is 
infinite and uniformly supported, contradicting the hypothesis.

2. Let $f: X \to X$ be a finitely supported surjection. Since $f$ is 
surjective, we can define the function $g:\wp_{fs}(X) \rightarrow 
\wp_{fs}(X)$ by $g(Y) = f^{-1}(Y)$ for all $Y\in \wp_{fs}(X)$ which is 
finitely supported by $supp(f) \cup supp(X)$ (according to the $S$-finite 
support principle) and injective. Alternatively, we can provide a direct 
proof that $g$ is finitely supported. Let $Y$ be an arbitrary element from 
$\wp_{fs}(X)$.  We claim that $f^{-1}(Y) \in \wp_{fs}(X)$. Let $\pi$ fix 
$supp(f) \cup supp(Y) \cup supp(X)$ pointwise, and $y \in f^{-1}(Y) $. This 
means $f(y) \in Y$. Since $\pi$ fixes $supp(f)$ pointwise and $supp(f)$ 
supports $f$, we have $f(\pi \cdot y)= \pi \cdot f(y) \in \pi \star Y = Y$, 
and so $\pi \cdot y \in f^{-1}(Y)$. Therefore, $f^{-1}(Y)$ is finitely 
supported, and so the function $g$ is well defined. We claim that $g$ is 
supported by $supp(f) \cup supp(X)$. Let $\pi$ fix $supp(f) \cup supp(X)$ 
pointwise.  For any arbitrary $Y \in \wp_{fs}(X)$ we get $\pi \star Y \in 
\wp_{fs}(X)$ and $\pi \star g(Y) \in \wp_{fs}(X)$. Furthermore, $\pi^{-1}$ 
fixes $supp(f)$ pointwise, and so $f(\pi^{-1} \cdot x)=\pi^{-1} \cdot f(x)$ 
for all $x\in X$. For any arbitrary $Y \in \wp_{fs}(X)$, we have that $z 
\in g(\pi \star Y) = f^{-1}(\pi \star Y) \Leftrightarrow f(z) \in \pi \star 
Y \Leftrightarrow \pi^{-1} \cdot f(z) \in Y \Leftrightarrow f(\pi^{-1} 
\cdot z) \in Y \Leftrightarrow \pi^{-1} \cdot z \in f^{-1}(Y) 
\Leftrightarrow z \in \pi \star f^{-1}(Y)=\pi \star g(Y)$. If follows that 
$g(\pi \star Y)=\pi \star g(Y)$ for all $Y \in \wp_{fs}(X)$, and so $g$ is 
finitely supported. Now, since $\wp_{fs}(X)$ is not FSM uniformly infinite, 
it follows from item 1 that $g$ is surjective.

Now let us consider two elements $a,b \in X$ such that $f(a)=f(b)$. We 
prove by contradiction that $a=b$. Suppose that $a \neq b$. Let us consider 
$Y=\{a\}$ and $Z=\{b\}$. Obviously, $Y,Z \in \wp_{fs}(X)$. Since $g$ is 
surjective, for $Y$ and $Z$ there is $Y_{1}, Z_{1} \in \wp_{fs}(X)$ such 
that $f^{-1}(Y_{1})=g(Y_{1})=Y$ and~$f^{-1}(Z_{1})=g(Z_{1})=Z$. We know 
that $f(Y) \cap f(Z)= \{f(a)\}$. Thus, $f(a) \in f(Y)=f(f^{-1}(Y_{1})) 
\subseteq Y_{1}$. Similarly, $f(a) =f(b) \in f(Z)=f(f^{-1}(Z_{1})) 
\subseteq Z_{1}$, and so $f(a) \in Y_{1} \cap Z_{1}$.  Thus, $a \in 
f^{-1}(Y_{1} \cap Z_{1})=f^{-1}(Y_{1}) \cap f^{-1}(Z_{1})=Y \cap Z$. 
However, since we assumed that $a \neq b$, we have that $Y \cap Z = 
\emptyset$, which represents a contradiction. It follows that $a=b$, and so 
$f$ is injective.

In order to prove the invalidity of the reverse implication, we prove 
that any finitely supported surjective mapping $f:\wp_{fin}(A) \to 
\wp_{fin}(A)$ is also injective, while $\wp_{fs}(\wp_{fin}(A))$ is FSM 
uniformly infinite (since it contains an infinite uniformly supported 
countable subset $(X_{n})_{n \in \mathbb{N}}$ where, for any $n \in 
\mathbb{N}$, $X_{n}$ is defined as the equivariant set of all $n$-sized 
subsets of atoms). Let us consider a finitely supported surjection 
$f:\wp_{fin}(A) \to \wp_{fin}(A)$. Let $X \in \wp_{fin}(A)$. Then 
$supp(X)=X$ and $supp(f(X))=f(X)$. Since $supp(f)$ supports $f$ and 
$supp(X)$ supports $X$, for any $\pi$ fixing pointwise $supp(f) \cup 
supp(X)=supp(f) \cup X$ we have $\pi \star f(X)=f(\pi \star X)=f(X)$ which 
means $supp(f) \cup X$ supports $f(X)$, that is $f(X)=supp(f(X)) \subseteq 
supp(f) \cup X$ (claim 1).

For a fixed $m \geq 1$, let us fix $m$ (arbitrarily chosen) atoms $b_{1}, 
\ldots, b_{m} \in A \setminus supp(f)$. Let us consider 
$\mathcal{U}=\{\{a_{1},\ldots, a_{n}, b_{1},\ldots, b_{m}\} \,|\, 
a_{1},\ldots, a_{n} \in supp(f) , n \geq 1\} \cup \{\{ b_{1},\ldots, 
b_{m}\}\}$. The set $\mathcal{U}$ is finite since $supp(f)$ is finite and 
$b_{1}, \ldots, b_{m} \in A \setminus supp(f)$ are fixed. Let us consider 
$Y \in \mathcal{U}$, that is $Y \setminus supp(f)=\{b_{1},\ldots, b_{m}\}$. 
There exists $Z \in \wp_{fin}(A)$ such that $f(Z)=Y$. According to (claim 
1), $Z$ must be either of form $Z=\{c_{1},\ldots, c_{k}, b_{i_{1}},\ldots, 
b_{i_{l}}\}$ with $c_{1},\ldots, c_{k} \in supp(f)$ and $b_{i_{1}},\ldots, 
b_{i_{l}} \in A \setminus supp(f)$ or of form $Z=\{ b_{i_{1}},\ldots, 
b_{i_{l}}\}$ with $b_{i_{1}},\ldots, b_{i_{l}} \in A \setminus supp(f)$. In 
both cases we have $\{b_{1},\ldots, b_{m}\} \subseteq \{b_{i_{1}},\ldots, 
b_{i_{l}}\}$. We should prove that $l=m$. Assume, by contradiction, that 
there exists $b_{i_{j}}$ with $j \in \{1,\ldots,l\}$ such that $b_{i_{j}} 
\notin \{b_{1},\ldots, b_{m}\}$. Then $(b_{i_{j}}\; b_{1}) \star Z=Z$ since 
both $b_{i_{j}}, b_{1} \in Z$ and $Z$ is a finite subset of $A$ 
($b_{i_{j}}$ and~$b_{1}$ are interchanged in~$Z$ under the effect of the 
transposition $(b_{i_{j}}\; b_{1})$, while the other atoms belonging to~$Z$ 
are left unchanged, meaning that the whole $Z$ is left invariant under 
$\star$). Furthermore, since $b_{i_{j}}, b_{1} \notin supp(f)$ we have that 
$(b_{i_{j}}\; b_{1})$ fixes $supp(f)$ pointwise, and, because $supp(f)$ 
supports $f$, we get $f(Z)=f((b_{i_{j}}\; b_{1}) \star Z)=(b_{i_{j}}\; 
b_{1}) \star f(Z)$ which is a contradiction because $b_{1} \in f(Z)$ while 
$b_{i_{j}} \notin f(Z)$. Thus, $\{b_{i_{1}},\ldots, b_{i_{l}}\} = 
\{b_{1},\ldots, b_{m}\}$, and so $Z \in \mathcal{U}$. Therefore, 
$\mathcal{U} \subseteq f(\mathcal{U})$ which means $|\mathcal{U}| \leq 
|f(\mathcal{U})|$. However, since $f$ is a function and $\mathcal{U}$ is 
finite, we get $|f(\mathcal{U})|\leq |\mathcal{U}|$. We obtain 
$|\mathcal{U}|=|f(\mathcal{U})|$ and, because $\mathcal{U}$ is finite with 
$\mathcal{U} \subseteq f(\mathcal{U})$, we get $\mathcal{U}=f(\mathcal{U})$ 
(claim 2) which means that $f|_{\mathcal{U}}:\mathcal{U} \to \mathcal{U}$ 
is surjective. Since $\mathcal{U}$ is finite, $f|_{\mathcal{U}}$ should be 
injective, i.e. $f(U_{1}) \neq f(U_{2})$ whenever $U_{1}, U_{2} \in 
\mathcal{U}$ with $U_{1}\neq U_{2}$~(claim~3).

Whenever $d_{1}, \ldots, d_{v} \in A \setminus supp(f)$ 
with$\{d_{1},\ldots, d_{v}\}\neq \{b_{1},\ldots, b_{m}\}$, $v \geq 1$, and 
considering $\mathcal{V}=\{\{a_{1},\ldots, a_{n}, d_{1},\ldots, d_{v}\} 
\,|\, a_{1},\ldots, a_{n} \in supp(f) , n \geq 1\} \cup \{\{ d_{1},\ldots, 
d_{v}\}\}$, we conclude that $\mathcal{U}$ and $\mathcal{V}$ are disjoint. 
Whenever $U_{1} \in \mathcal{U}$ and $V_{1} \in \mathcal{V}$ we have 
$f(U_{1}) \in \mathcal{U}$ and $f(V_{1}) \in \mathcal{V}$ by using the same 
arguments used to prove (claim 2), and so $f(U_{1})\neq f(V_{1})$ (claim 
4). If $\mathcal{T}=\{\{a_{1},\ldots, a_{n}\} \,|\, a_{1},\ldots, a_{n} \in 
supp(f)\}$ and $Y \in \mathcal{T}$, then there is $T'\in \wp_{fin}(A)$ such 
that $Y=f(T')$. Similarly as in (claim 2), we should have $T' \in 
\mathcal{T}$. Otherwise, if $T'$ belongs to some $\mathcal{V}$ considered 
above, i.e. if $T'$ contains an element outside $supp(f)$, we get the 
contradiction $Y=f(T') \in \mathcal{V}$) and so $\mathcal{T} \subseteq 
f(\mathcal{T})$ from which $\mathcal{T}=f(\mathcal{T})$ since $\mathcal{T}$ 
is finite (using similar arguments as those involved to prove (claim 3) 
from $\mathcal{U} \subseteq f(\mathcal{U})$). Thus, 
$f|_{\mathcal{T}}:\mathcal{T} \to \mathcal{T}$ is surjective. Since 
$\mathcal{T}$ is finite, $f|_{\mathcal{T}}$ should be also injective, 
namely $f(T_{1}) \neq f(T_{2})$ whenever $T_{1}, T_{2} \in \mathcal{T}$ 
with $T_{1}\neq T_{2}$ (claim 5). The case $supp(f)=\emptyset$ is contained 
in the above analysis; it leads to $f(\emptyset)=\emptyset$ and $f(X)=X$ 
for all $X \in \wp_{fin}(A)$. We also have $f(T_{1}) \neq f(V_{1})$ 
whenever $T_{1} \in \mathcal{T}$ and $V_{1} \in \mathcal{V}$ since 
$f(T_{1}) \in \mathcal{T}$, $f(V_{1}) \in \mathcal{V}$ and $\mathcal{T}$ 
and $\mathcal{V}$ are disjoint (claim 6). Since $b_{1}, \ldots, b_{m}$ and 
$d_{1}, \ldots, d_{v}$ were arbitrarily chosen from $A \setminus supp(f)$, 
the injectivity of $f$ leads from the claims (3), (4), (5) and (6) covering 
all the possible cases for two different finite subsets of atoms and 
comparison of the values of~$f$ over the related subsets of atoms.  
\end{proof}

Theorem \ref{propro} (related to Theorem 2 in \cite{AFML}) allows us to 
establish a strong result generalizing the approach in \cite{AFML} 
by claiming that a finitely supported mapping $f : \wp_{fin}(A) \to 
\wp_{fin}(A)$ is injective \emph{if and only if} it is surjective.

\begin{theorem} \label{Tti} 
Let $X$ be a finitely supported subset of an invariant set $(Z, \cdot)$. 
If $X$ contains an infinite, finitely supported, totally ordered subset, 
then it is FSM uniformly infinite. 
\end{theorem}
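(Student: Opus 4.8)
The plan is to show that the hypothesised chain is \emph{itself} already uniformly supported, so that it directly witnesses that $X$ is FSM uniformly infinite. Let $W \subseteq X$ be the infinite, finitely supported subset that carries a total order $\preceq$, which I read as a finitely supported total order relation, i.e. a finitely supported subset of $Y \times Y$ in the sense of the earlier definitions. Put $S = supp(W) \cup supp(\preceq)$; this is a finite set of atoms supporting both $W$ and the relation $\preceq$. I claim that every element of $W$ is supported by $S$, that is, $W$ is uniformly supported by $S$; since $W$ is infinite this immediately yields the conclusion.

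First I would record how permutations in $Fix(S)$ act on the chain. Fix $\pi \in Fix(S)$. Because $S$ supports $W$ we have $\pi \star W = W$, so the assignment $\sigma_\pi : w \mapsto \pi \cdot w$ is a bijection of $W$ onto itself. Because $S$ also supports $\preceq$, for any $w_1, w_2 \in W$ we have $(w_1, w_2) \in \preceq$ if and only if $\pi \otimes (w_1, w_2) = (\pi \cdot w_1, \pi \cdot w_2) \in \preceq$; hence $\sigma_\pi$ is an order-automorphism of $(W, \preceq)$. Moreover $\sigma_\pi$ has finite order: since every permutation of atoms has finite order, there is $n \geq 1$ with $\pi^n = \mathrm{id}$, whence $\sigma_\pi^{\,n} = \sigma_{\pi^n} = \mathrm{id}_W$.

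The core of the argument is the elementary fact that a finite-order order-automorphism of a \emph{totally} ordered set must be the identity. Indeed, if $\sigma_\pi(x) \neq x$ for some $x \in W$, then by totality $x \prec \sigma_\pi(x)$ or $\sigma_\pi(x) \prec x$; in the first case, applying the strictly order-preserving map $\sigma_\pi$ repeatedly yields $x \prec \sigma_\pi(x) \prec \sigma_\pi^{\,2}(x) \prec \cdots \prec \sigma_\pi^{\,n}(x) = x$, a contradiction, and the second case is symmetric. Therefore $\sigma_\pi = \mathrm{id}_W$, i.e. $\pi \cdot w = w$ for every $w \in W$. As $\pi \in Fix(S)$ was arbitrary, each $w \in W$ is supported by $S$, so $supp(w) \subseteq S$ for all $w \in W$ and $W$ is uniformly supported by $S$. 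Being infinite, $W$ is an infinite uniformly supported subset of $X$, and $X$ is FSM uniformly infinite.

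I expect the delicate point to be the verification that the relevant permutations act as genuine order-automorphisms: this is where the finite support of $W$ (to keep the action inside $W$) and the finite support of $\preceq$ (to preserve the relation) are needed simultaneously, and choosing the common support $S$ is what reconciles them. The other step to state carefully is the finite-order lemma, whose proof genuinely uses totality of $\preceq$ — for a merely partial order a nontrivial finite-order automorphism can exist — together with the standing fact, already noted in the excerpt, that every finitary permutation of atoms has finite order.
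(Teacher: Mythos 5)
Your proposal is correct and follows essentially the same route as the paper's own proof: both show that the totally ordered subset is itself uniformly supported by $supp(W)\cup supp(\preceq)$, by arguing that any $\pi$ fixing this set pointwise acts as an order-automorphism of the chain, and that the finite order of $\pi$ together with totality forces $\pi\cdot w=w$ for every $w$ (via the strictly increasing cycle $w\prec\pi\cdot w\prec\cdots\prec\pi^{m}\cdot w=w$). Your explicit isolation of the lemma that a finite-order order-automorphism of a totally ordered set is the identity is a slightly cleaner packaging of the same argument, but not a different proof.
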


\begin{proof}
Assume that $X$ contains an infinite, finitely supported, totally ordered 
subset $(Y, \leq)$.  We claim that $Y$ is uniformly supported by 
$supp(\leq) \cup supp(Y)$. Let $\pi$ be a permutation fixing $supp(\leq) 
\cup supp(Y)$ pointwise and let $y \in Y$ an arbitrary element. Since $\pi$ 
fixes $supp(Y)$ pointwise and $supp(Y)$ supports $Y$, we obtain that $\pi 
\cdot y \in Y$, and so we should have either $y<\pi \cdot y$, or $y=\pi 
\cdot y$, or $\pi \cdot y<y$. If $y<\pi \cdot y$, then, because $\pi$ fixes 
$supp(\leq)$ pointwise and because the mapping $z\mapsto \pi \cdot z$ is 
bijective from $Y$ to $\pi \star Y$, we get $y<\pi \cdot y<\pi^{2} \cdot 
y<\ldots < \pi^{n} \cdot y$ for all $n \in \mathbb{N}$. However, since any 
permutation of atoms interchanges only finitely many atoms, it has a finite 
order in the group $S_{A}$, and so there is $m \in \mathbb{N}$ such that 
$\pi^{m}=Id$ (where $Id$ is the identity on $A$). This means $\pi^{m} \cdot 
y=y$, and so we get $y<y$ which is a contradiction. Similarly, the 
assumption $\pi \cdot y<y$, leads to the relation $\pi^{n} \cdot 
y<\ldots<\pi \cdot y<y$ for all $n \in \mathbb{N}$ which is also a 
contradiction since $\pi$ has finite order. Therefore, $\pi \cdot y=y$, and 
because $y$ was arbitrary chosen form $Y$, $Y$ should be a uniformly 
supported infinite subset of $X$. 
\end{proof}

\begin{definition} \label{dd1} \ 
\begin{itemize}
 \item Two FSM sets $X$ and $Y$ are \emph{FSM equipollent} if there exists 
a finitely supported bijection $f:X\to Y$.  
\item The \emph{FSM cardinality} of  $X$ is defined as the equivalence class of all FSM sets 
equipollent to $X$, and is denoted by $|X|$. 
\end{itemize}
\end{definition}
According to Definition \ref{dd1} for two FSM sets $X$ and $Y$, we have 
$|X|=|Y|$ if and only if there exists a finitely supported bijection $f:X 
\to Y$. On the family of cardinalities we can define the relations:
\begin{itemize}
\item  $\leq$ by: $|X| \leq |Y|$ if and only if there is a finitely supported
injective (one-to-one) mapping $f:X \to Y$.
\item $\leq^{*}$ by: $|X| \leq^{*} |Y|$ if and only if there is a finitely 
supported surjective (onto) mapping $f:Y \to X$. 
\end{itemize}

By using Theorem 4.5 and Theorem 4.6 from \cite{mvlsc}, 
we can present the following result.

\begin{theorem} \label{cardord} \ 
\begin{enumerate}
\item The relation $\leq$ is equivariant, reflexive, anti-symmetric and transitive, but it is not total.
\item The relation $\leq^{*}$ is equivariant, reflexive and transitive, but it is not anti-symmetric, nor total.
\end{enumerate}
\end{theorem}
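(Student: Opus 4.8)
The plan is to treat the routine closure properties directly and to isolate the two genuinely non-trivial points — anti-symmetry of $\leq$ and non-anti-symmetry of $\leq^{*}$ — as consequences of the cited Theorems 4.5 and 4.6 of \cite{mvlsc}. First I would verify reflexivity, transitivity and equivariance of both relations by exhibiting the witnessing maps explicitly. Reflexivity of $\leq$ (resp. $\leq^{*}$) follows from the identity $\mathrm{id}_{X}$, which is equivariant and bijective. For transitivity, if $f:X\to Y$ and $h:Y\to W$ are finitely supported injections (resp. surjections), then $h\circ f$ is a finitely supported injection (resp. surjection) supported by $supp(f)\cup supp(h)$; this uses Proposition \ref{2.18'} together with the $S$-finite support principle. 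Equivariance amounts to observing that, for any $\pi\in S_{A}$ and any finitely supported $f$ witnessing $|X|\leq|Y|$, the map $\pi\widetilde{\star}f$ witnesses $|\pi\star X|\leq|\pi\star Y|$ and is again injective (resp. surjective); hence each relation, viewed as a subset of pairs of cardinals, is left invariant under the canonical action.

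Next I would establish non-totality of both $\leq$ and $\leq^{*}$ using a single incomparable pair, namely the invariant set $A$ of atoms and the trivial invariant set $\mathbb{N}$. There is no finitely supported injection (indeed, no finitely supported surjection) $\mathbb{N}\to A$: any finitely supported $f:\mathbb{N}\to A$ has every value $f(n)$ supported by $supp(f)$, so its image is a uniformly supported subset of $A$, which is finite by Theorem \ref{ti2}(1). Conversely, there is no finitely supported injection (nor surjection) $A\to\mathbb{N}$: if $f:A\to\mathbb{N}$ is supported by a finite set $S$, then for any $a,b\notin S$ the transposition $(a\,b)$ fixes $S$ pointwise and $\mathbb{N}$ is trivial, whence $f(b)=f((a\,b)\cdot a)=(a\,b)\cdot f(a)=f(a)$; thus $f$ is constant on the infinite set $A\setminus S$ and can be neither injective nor onto. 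Hence $|A|$ and $|\mathbb{N}|$ are incomparable for both relations.

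The main obstacle is anti-symmetry of $\leq$, for which I would invoke the finitely supported Cantor--Schr\"oder--Bernstein theorem (Theorem 4.5 of \cite{mvlsc}). Given finitely supported injections $f:X\to Y$ and $g:Y\to X$, the classical back-and-forth bijection $h$ is built from $f,g$ by the equivariant operations of image, complement, composition and countable union: one partitions $X$ according to whether an element lies in $\bigcup_{n}(g\circ f)^{n}(X\setminus g(Y))$, setting $h=f$ there and $h=g^{-1}$ elsewhere. Since this construction never leaves the finitely supported universe, the $S$-finite support principle guarantees that $h$ is supported by $supp(f)\cup supp(g)$, so $|X|=|Y|$; the only point needing care is that $g^{-1}$ is applied exactly where it is defined and is finitely supported there.

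Finally, for the failure of anti-symmetry of $\leq^{*}$ I would exhibit the witnessing pair supplied by Theorem 4.6 of \cite{mvlsc}: two FSM sets admitting finitely supported surjections in both directions yet no finitely supported bijection. The delicate part here is precisely that, unlike the classical situation, mutual surjections do not force a bijection in the absence of choice. As the incomparability phenomena above suggest, adjoining atomic structure strictly raises the $\leq^{*}$-complexity, so the counterexample cannot be produced by the naive ``add a point'' constructions (which, by Theorem \ref{propro} applied to $\wp_{fin}(A)$, fail to yield a surjection at all); hence this last item genuinely rests on the cited construction rather than on an elementary argument.
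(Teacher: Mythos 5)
Your proposal is correct, and it takes a genuinely different route from the paper for the simple reason that the paper gives no proof at all: Theorem \ref{cardord} is stated with the single remark that it follows ``by using Theorem 4.5 and Theorem 4.6 from \cite{mvlsc}'', and no proof environment follows. Everything you prove from scratch checks out. Reflexivity, transitivity and equivariance via $\pi\widetilde{\star}f$ are routine and correctly handled. Your incomparability argument for $|A|$ versus $|\mathbb{N}|$ is sound and covers both relations at once: the image of a finitely supported $f:\mathbb{N}\to A$ lies in $supp(f)$ (each $f(n)$ is an atom supported by $supp(f)$, hence an element of it), and a finitely supported $f:A\to\mathbb{N}$ is constant on $A\setminus supp(f)$, so neither an injection nor a surjection exists in either direction. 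Your sketch of the finitely supported Cantor--Schr\"oder--Bernstein argument is also correct: the sets $(g\circ f)^{n}(X\setminus g(Y))$ form a uniformly supported family, so their union and the piecewise-defined bijection are supported by $supp(f)\cup supp(g)\cup supp(X)\cup supp(Y)$. The two points you do leave to the citation --- the full CSB theorem and the explicit pair witnessing failure of anti-symmetry of $\leq^{*}$ --- are exactly the content of the cited Theorems 4.5 and 4.6, so you are no worse off than the paper there, and considerably better off everywhere else. The only weak spot is your closing paragraph: the claim that ``adjoining atomic structure strictly raises the $\leq^{*}$-complexity'' and the appeal to Theorem \ref{propro} for $\wp_{fin}(A)$ do not actually demonstrate that no elementary counterexample exists; but since that passage only motivates deferring to the citation rather than carrying any logical weight, it does not create a gap.
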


\begin{theorem} \label{TTRR} 
Let $X$ be a finitely supported subset of an invariant set $(Y, \cdot)$
\begin{enumerate}
\item If $|X|=|X \times X|$, then $|X|=2|X|$. The converse does not hold.
\item If $|X|=2|X|$, then $X$ is FSM uniformly infinite. 
The converse does not hold.
\end{enumerate}
\end{theorem}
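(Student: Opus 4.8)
The plan is to obtain both implications cheaply from two tools already established, namely the antisymmetry (and transitivity) of $\leq$ from Theorem \ref{cardord}(1) and the injective-implies-surjective result of Theorem \ref{propro}(1), and then to devote the real effort to exhibiting the two counterexamples. Throughout I read $2|X|$ as $|X+X|$, the FSM cardinality of the disjoint union of $X$ with itself.

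For item 1, I would assume $|X|=|X\times X|$ and squeeze $2|X|$ between $|X|$ and itself. One inequality is immediate: the equivariant map $x\mapsto(0,x)$ injects $X$ into $X+X$, so $|X|\leq 2|X|$. For the reverse inequality I would produce a finitely supported injection of $X+X$ into $X\times X$ by fixing two distinct elements $a,b\in X$ and setting $(0,x)\mapsto(x,a)$, $(1,x)\mapsto(x,b)$; this is injective and supported by $supp(X)\cup supp(a)\cup supp(b)$. Composing with the hypothesised bijection $X\times X\to X$ gives $2|X|\leq|X\times X|=|X|$, whence antisymmetry of $\leq$ yields $|X|=2|X|$. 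This step needs $X$ to contain at least two elements, which is the case of interest here (a finite $X$ satisfying $|X|=|X\times X|$ is degenerate).

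For item 2, assume $|X|=2|X|$ and take a finitely supported bijection $g:X+X\to X$. Put $X_{1}=g(\{1\}\times X)$ and define $h:X\to X$ by $h(x)=g(1,x)$; then $h$ is a finitely supported injection with image $X_{1}$. Since $g$ is a bijection and $\{0\}\times X$ is nonempty, the set $X_{0}=g(\{0\}\times X)$ is a nonempty subset of $X$ disjoint from $X_{1}$, so $X_{1}\subsetneq X$ and $h$ is not surjective. By the contrapositive of Theorem \ref{propro}(1), any set admitting a non-surjective finitely supported self-injection must be FSM uniformly infinite, so $X$ is FSM uniformly infinite (this argument requires $X\neq\emptyset$).

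It remains to refute the two converses, and this support-tracking bookkeeping is where the main obstacle lies. For item 1 I would take $X=\mathbb{N}\times A$: since $\mathbb{N}+\mathbb{N}$ is equivariantly bijective with $\mathbb{N}$ one gets $|X|=2|X|$, yet $X\times X$ is equivariantly bijective with $\mathbb{N}\times A\times A$ and no finitely supported bijection of $X$ onto the latter can exist, because any finitely supported $f$ satisfies $supp(f(z))\subseteq supp(z)\cup supp(f)$, so an element $z$ of $\mathbb{N}\times A$ (whose support has size at most one) can never be sent onto a triple $(m,c,d)$ with $c,d$ distinct atoms both outside $supp(f)$. For item 2 I would take $X=A+\mathbb{N}$: it is FSM uniformly infinite since the copy of $\mathbb{N}$ is infinite and $\emptyset$-supported, while $|X|\neq 2|X|$ because the same support inclusion shows that for each atom $c\notin supp(f)$ the only source element whose image can have $c$ in its support is the atom $c$ itself, whereas $(A+\mathbb{N})+(A+\mathbb{N})$ contains two distinct elements (one in each copy of $A$) demanding $c$ in their support, so surjectivity is impossible. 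The positive implications are short once Theorems \ref{cardord} and \ref{propro} are invoked; the delicate part is verifying non-existence of the bijections in the counterexamples via the support-cardinality constraint.
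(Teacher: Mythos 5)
Your proposal is correct, and its skeleton coincides with the paper's: both positive implications are obtained exactly as you do (item 1 by sandwiching $2|X|$ between $|X|$ and $|X\times X|$ via the injections $x\mapsto(0,x)$ and $(i,x)\mapsto(x,x_i)$ and invoking antisymmetry of $\leq$ from Theorem \ref{cardord}; item 2 by manufacturing a non-surjective finitely supported self-injection and applying the contrapositive of Theorem \ref{propro}(1)), and both converses are refuted with the same witnesses $\mathbb{N}\times A$ and $A+\mathbb{N}$. The differences are in the details. For item 2 you restrict the bijection $X+X\to X$ to one summand, which is slightly more direct than the paper's device of adjoining an external element $y_{1}\notin X$ and injecting $X\cup\{y_{1}\}$ into $X$. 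More substantially, your verification that the counterexample bijections cannot exist runs entirely through the single inclusion $supp(f(z))\subseteq supp(f)\cup supp(z)$ together with a count of atoms outside $supp(f)$: an element of $\mathbb{N}\times A$ has at most one atom in its support so cannot hit a triple $(m,c,d)$ with two fresh atoms, and the two copies of a fresh atom $c$ in $(A+\mathbb{N})+(A+\mathbb{N})$ would both have to come from the unique source element supported by $\{c\}$. The paper instead argues by explicit transposition gymnastics (deriving $f(i,b)=(b,a)$ and $f(i,b)=(b,c)$ simultaneously) for the first counterexample, and via the finite-or-cofinite dichotomy for finitely supported subsets of $A$ (producing two disjoint infinite such subsets) for the second. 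Your uniform support-cardinality argument is shorter and arguably cleaner; the paper's arguments are more self-contained in that they only manipulate concrete permutations. Both you and the paper silently assume $|X|\geq 2$ in item 1 and $X\neq\emptyset$ in item 2 (the statements are vacuously problematic for singletons and the empty set), so this is not a defect of your write-up relative to the original.
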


\begin{proof} 
1. Fix two elements $x_{1}, x_{2} \in X$ with $x_{1}\neq x_{2}$. We can 
define an injection $f:X \times \{0,1\} \to X \times X$ by $f(u)=\left\{ 
\begin{array}{ll} (x,x_{1}) & \text{for}\: u=(x,0)\\ (x,x_{2}) & 
\text{for}\: u=(x,1) \end{array}\right.$. Clearly, by checking the 
condition in Proposition \ref{2.18'} and using Proposition \ref{p1}, we 
have that $f$ is supported by $supp(X) \cup supp(x_{1}) \cup supp(x_{2})$ 
(since $\{0,1\}$ is necessarily a trivial invariant set), and so 
$|X\times\{0,1\}|\leq |X \times X|$. Thus, $|X\times\{0,1\}| \leq |X|$. 
Obviously, there is an injection $i: X \to X\times\{0,1\}$ defined by 
$i(x)=(x,0)$ for all $x \in X$ which is supported by $supp(X)$. 
According to Theorem \ref{cardord}, we get $2|X|=|X \times \{0,1\}|=|X|$.

Let us consider $Z=\mathbb{N} \times A$. We make the remark that 
$|\mathbb{N}\times \mathbb{N}|=|\mathbb{N}|$ by considering the equivariant 
injection $h:\mathbb{N} \times \mathbb{N} \to \mathbb{N}$ defined by 
$h(m,n)=2^{m}3^{n}$ and using Theorem \ref{cardord}. Similarly, 
$|\{0,1\}\times \mathbb{N}|=|\mathbb{N}|$ by considering the equivariant 
injection $h':\mathbb{N} \times \{0,1\}\to \mathbb{N}$ defined by 
$h'(n,0)=2^{n}$ and $h'(n,1)=3^{n}$ and using Theorem \ref{cardord}. We 
have $2|Z|=2|\mathbb{N}||A|=|\mathbb{N}||A|=|Z|$. However, we prove that 
$|Z \times Z|\neq |Z|$. Assume the contrary, and so we have $|\mathbb{N} 
\times (A \times A)|=|\mathbb{N} \times A \times \mathbb{N} \times 
A|=|\mathbb{N} \times A|$. Thus, there is a finitely supported injection 
$g: A \times A \to \mathbb{N} \times A$, and so there is a finitely 
supported surjection $f:\mathbb{N} \times A \to A \times A$ defined as 
$f(y)=\left\{ \begin{array}{ll} g^{-1}(y), & \text{if}\: \text{$y \in 
Im(g)$ }\\ x_{0}, & \text{if}\: \text{$y \notin Im(g)$}\: 
\end{array}\right.$ where $x_{0}$ is a fixed element in $A \times A$.  Let 
us consider three different atoms $a,b,c \notin supp(f)$.  There exists 
$(i,x) \in \mathbb{N} \times A$ such that $f(i,x)=(a,b)$. Since $(a\,b) \in 
Fix(supp(f))$ and $\mathbb{N}$ is trivial invariant set, we have 
$f(i,(a\,b)(x))=(a\,b)f(i,x)=(a\,b)(a,b)=((a\,b)(a),(a\,b)(b))=(b,a)$. We 
should have $x=a$ or $x=b$, otherwise $f$ is not a function. Assume without 
losing the generality that $x=a$, which means $f(i,a)=(a,b)$. Therefore 
$f(i,b)=f(i,(a\,b)(a))=(a\,b)f(i,a)=(a\,b)(a,b)=(b,a)$. Similarly, since 
$(a\,c),(b\,c) \in Fix(supp(f))$, we have 
$f(i,c)=f(i,(a\,c)(a))=(a\,c)f(i,a)=(a\,c)(a,b)=(c,b)$ and 
$f(i,b)=f(i,(b\,c)(c))=(b\,c)f(i,c)=(b\,c)(c,b)=(b,c)$. But $f(i,b)=(b,a)$ 
contradicting the functionality of $f$.

2. Let us consider an element $y_{1}$ belonging to an invariant set (whose 
action is also denoted by $\cdot$) with $y_{1}\notin X$ (such an element 
can be a non-empty element in $\wp_{fs}(X) \setminus X$, for instance). 
Fix $y_{2} \in X$.  One can define a mapping $f:X \cup \{y_{1}\} \to X \times 
\{0,1\}$ by $f(x)=\left\{ \begin{array}{ll} (x,0) & \text{for}\: x \in X\\ 
(y_{2}, 1) & \text{for}\: x=y_{1} \end{array}\right.$. Clearly, $f$ is 
injective and it is supported by $S=supp(X) \cup supp(y_{1}) \cup 
supp(y_{2})$ because for all $\pi$ fixing $S$ pointwise we have $f(\pi 
\cdot x)=\pi \cdot f(x)$ for all $x \in X \cup \{y_{1}\}$. Therefore, $|X 
\cup \{y_{1}\}| \leq |X \times \{0,1\}|=|X|$, and so there is a finitely 
supported injection $g:X \cup \{y_{1}\} \to X$. The mapping $h:X \to X$ 
defined by $h(x)=g(x)$ is injective, supported by $supp(g) \cup supp(X)$, 
and $g(y_{1}) \in X \setminus h(X)$, which means $h$ is not surjective. 
According to Theorem \ref{propro}(1), $X$ should be FSM uniformly infinite.

Let us denote $Z=A \cup \mathbb{N}$. Since $A$ and $\mathbb{N}$ are 
disjoint, we have that $Z$ is an invariant set. Clearly, $Z$ is FSM 
uniformly infinite. Assume, by contradiction, that $|Z|=2|Z|$, that is $|A 
\cup \mathbb{N}|=|A+A+\mathbb{N}|=|(\{0,1\}\times A) \cup \mathbb{N}|$. 
Thus, there is a finitely supported injection $f':(\{0,1\}\times A) \cup 
\mathbb{N}\to A \cup \mathbb{N}$, and so there exists a finitely supported 
injection $f:(\{0,1\}\times A) \to A \cup \mathbb{N}$. We prove that 
whenever $\varphi:A \to A \cup \mathbb{N}$ is finitely supported and 
injective, we have $\varphi(a) \in A$ for $a \notin supp(\varphi)$. 
Let us assume by contradiction that there is $a \notin supp(\varphi)$ such that 
$\varphi(a)\in \mathbb{N}$. Since $supp(\varphi)$ is finite, there exists 
$b \notin supp(\varphi)$, $b \neq a$. Thus, $(a\,b)$ fixes $supp(\varphi)$ 
pointwise, and so $\varphi(b)=\varphi((a\,b)(a))=(a\,b)\diamond 
\varphi(a)=\varphi(a)$ since $(\mathbb{N}, \diamond)$ is a trivial 
invariant set. This contradicts the injectivity of $\varphi$.  We can 
consider the mappings $\varphi_{1},\varphi_{2}: A \to A \cup \mathbb{N}$ 
defined by $\varphi_{1}(a)=f(0,a)$ for all $a \in A$ and 
$\varphi_{2}(a)=f(1,a)$ for all $a \in A$, that are injective and supported 
by $supp(f)$. Therefore, $f(\{0\} \times A)=\varphi_{1}(A)$ contains at 
most finitely many element from $\mathbb{N}$, and $f(\{1\} \times 
A)=\varphi_{2}(A)$ also contains at most finitely many element from 
$\mathbb{N}$. Thus, $f$ is an injection from $(\{0,1\}\times A)$ to $A \cup 
T$ where $T$ is a finite subset of $\mathbb{N}$. It follows that $f(\{0\} 
\times A)$ contains an infinite finitely supported subset of atoms $U$, and 
$f(\{1\} \times A)$ contains an infinite finitely supported subset of atoms 
$V$. Since $f$ is injective, it follows that $U$ and $V$ are infinite 
disjoint finitely supported subsets of $A$, which contradicts the fact that 
any subset of $A$ is either finite or cofinite.
\end{proof}

\section{Conclusion}

The newly developed theory of finitely supported sets allows the 
computational study of structures which are very large, possibly infinite, 
but containing enough symmetries such that they can be clearly/concisely 
represented and manipulated. Uniformly supported sets are particularly of 
interest because they involve boundedness properties of supports, meaning 
that the support of each element in an uniformly supported set is contained 
in the same finite set of atoms. In this way, all the individuals in an 
infinite uniformly supported family can be characterized by involving only 
finitely many characteristics.

In this paper we described FSM uniformly infinite sets that are finitely 
supported sets containing infinite, uniformly supported subsets.  Firstly 
we proved that the finite powerset and the uniform powerset of a set that 
is FSM uniformly finite is also FSM non-uniformly infinite (Theorem 
\ref{lem1} and Theorem \ref{lem2}). Finitely supported order-preserving 
self-mappings on the finite powerset and, respectively, on the uniform 
powerset of a set that is FSM non-uniformly infinite have least fixed 
points (Theorem~\ref{th1}). This is an important extension of Tarski's 
fixed point theorem for complete lattices that is specific to FSM; 
generally, order-preserving functions on finite powersets do not have fixed 
points since the finite powersets are not complete lattices. Particularly, 
finitely supported order-preserving mappings $f:\wp_{fin}(A) \to \wp_{fin}(A)$, 
finitely supported order-preserving mappings $f:\wp_{fin}(\wp_{fs}(A)) \to 
\wp_{fin}(\wp_{fs}(A))$ and finitely supported order-preserving mappings 
$f:\wp_{fin}(A^{A}_{fs}) \to \wp_{fin}(A^{A}_{fs})$ should have least fixed 
points that are supported by $supp(f)$ in each case.  
Another fixed point property is described in Theorem~\ref{th11}. 
Particularly, finitely supported progressive (inflationary) self-mappings 
defined on $\wp_{fin}(A)$ have infinitely many fixed points as proved in 
Proposition \ref{pf1ty}. We can also prove that any finitely supported, 
strict order-preserving, self-mapping $f$ on $\wp_{fin}(A)$ has infinitely many 
fixed points (namely all the sets $X \setminus supp(f)$ with $X \in \wp_{fin}(A)$).

Operations with FSM uniformly (in)finite sets are presented in Theorem 
\ref{ti1}. We were able to prove that $A$, $\wp_{fs}(A)$, $T_{fin}(A)$, 
$\wp_{fin}(\wp_{fs}(A))$, $A^{A}_{fs}$, $\wp_{fin}(A^{A}_{fs})$, 
$(A^{n})^{A}_{fs}$ (for a fixed $n \in \mathbb{N}$), $T_{fin}(A)^{A}_{fs}$ 
and $\wp_{fs}(A)^{A}_{fs}$ are FSM non-uniformly infinite, while 
$\wp_{fs}(\wp_{fin}(A))$ and $T^{\delta}_{fin}(A)$ are FSM uniformly 
infinite. Connections between FSM uniformly non-infinity and 
injectivity/surjectivity of self-mappings on FSM sets are presented in 
Theorem \ref{propro}. One can easily remark that a finitely supported 
function $f:A \to A$ is injective if and only if it is surjective. 
Furthermore, any finitely supported injection $f:\wp_{fs}(A) \rightarrow 
\wp_{fs}(A)$ is also surjective, any finitely supported injection 
$f:\wp_{fin}(\wp_{fs}(A)) \rightarrow \wp_{fin}(\wp_{fs}(A))$ is also 
surjective, and any finitely supported injection $f:A^{A}_{fs} \to 
A^{A}_{fs}$ is also surjective. These results generalize/extend related 
results presented in Theorem 2 of \cite{AFML}.
In Theorem \ref{Tti} we proved that a finitely supported subset of an 
invariant set containing an infinite, finitely supported, totally ordered 
subset is FSM uniformly infinite. Finally, we connected the concept of being 
FSM uniformly infinite with cardinality properties of form $|X|=|X \times 
X|$ and $|X|=2|X|$, respectively (Theorem~\ref{TTRR}).

The case study presented in this paper can be significantly extended by 
presenting several other definitions of infinity (Dedekind type, 
Mostowski type, Tarski type and Kuratowski type), and then comparing them 
in the framework of atomic finitely supported sets. 
This is the topic of a future~paper.

\nocite{*}
\bibliographystyle{eptcs}

\end{document}